\documentclass[reqno, 11pt]{amsart}
\usepackage{mathtools}
\usepackage{amsmath}
\usepackage{amssymb}
\usepackage{yhmath}
\usepackage{graphicx}
\usepackage{ mathrsfs }
\usepackage{bbm}
\usepackage{xcolor}
\usepackage{tikz-cd}
\usepackage{tikz}
\usetikzlibrary{patterns}
\usepackage{hyperref}

\setcounter{tocdepth}{1}
\DeclareMathAlphabet{\mathpzc}{OT1}{pzc}{m}{it}

\usepackage{thmtools}
\usepackage{thm-restate}

\usepackage{caption}

\newtheorem{theorem}{Theorem}[section]

\newtheorem*{claim*}{Claim}

\newtheorem{lemma}[theorem]{Lemma}
\newtheorem{lem}[theorem]{Lemma}
\newtheorem{corollary}[theorem]{Corollary}

\newtheorem{cor}[theorem]{Corollary}


\theoremstyle{definition}
\newtheorem{Def}[theorem]{Definition}

\theoremstyle{remark}

\newtheorem{rmk}[theorem]{Remark}

\numberwithin{equation}{section}


\newcommand{\op}{\operatorname}

\newcommand{\be}{\begin{equation}}
\newcommand{\ee}{\end{equation}}
\newcommand{\Ga}{\Gamma}
\newcommand{\bc}{\mathbb C}

\renewcommand{\H}{\mathbb H}

\newcommand{\N}{\mathbb N}
\newcommand{\ga}{\gamma}

\newcommand{\La}{\Lambda}
\newcommand{\inte}{\op{int}}
\newcommand{\ba}{\backslash}

\newcommand{\cal}{\mathcal}
\newcommand{\br}{\mathbb R}

\newcommand{\Isom}{\op{Isom}}
\newcommand{\PSL}{\op{PSL}}
\newcommand{\F}{\cal F}

\newcommand{\bH}{\mathbb H}

\newcommand{\diam}{\op{Diam}}

\newcommand{\G}{\Gamma}

\newcommand{\e}{\varepsilon}

\renewcommand{\L}{\mathcal L}

\renewcommand{\S}{\mathbb S}

\newcommand{\Gr}{\Gamma_\rho}

\renewcommand{\c}{\bc}

\newcommand{\id}{\op{id}}
\newcommand{\dGa}{\mathfrak R_{\op{disc}}(\Ga)}
\newcommand{\Mob}{\text{M\"ob}}
\newcommand{\PGL}{\op{PGL}}

\begin{document}

\title{Rigidity of Kleinian groups via self-joinings}

\author{Dongryul M. Kim}
\address{Department of Mathematics, Yale University, New Haven, CT 06511, USA}
\email{dongryul.kim@yale.edu}

\author{Hee Oh}
\address{Department of Mathematics, Yale University, New Haven, CT 06511, USA and Korea Institute for Advanced Study, Seoul, South Korea}
\email{hee.oh@yale.edu}

\thanks{
 Oh is partially supported by the NSF grant No. DMS-1900101.}
 
\begin{abstract} 
Let $\Gamma<\op{PSL}_2(\mathbb{C})\simeq \op{Isom}^+(\mathbb{H}^3)$ be a finitely generated non-Fuchsian Kleinian group whose ordinary set $\Omega=\S^2-\Lambda$ has at least two components. Let $\rho : \Gamma \to \op{PSL}_2(\mathbb{C})$ be a faithful discrete non-Fuchsian representation with boundary map $f:\Lambda\to \mathbb{S}^2$ on the limit set.

In this paper, we obtain a new rigidity theorem:
 if $f$ is {\it conformal on $\La$}, in the sense that $f$ maps every circular slice of $\Lambda$ into a circle, then $f$ extends to a M\"obius transformation $g$ on $\S^2$ and 
$\rho$ is the conjugation by $g$.
Moreover, unless $\rho$ is a conjugation, 
the set of circles $C$
such that $f(C\cap \Lambda)$ is contained in a circle has empty interior in the space of all circles meeting $\Lambda$.
This answers a question asked by McMullen on the rigidity of maps $\Lambda\to \S^2$ sending vertices of every tetrahedron of zero-volume to vertices of a tetrahedron of zero-volume.

The novelty of our proof is a new viewpoint of relating the rigidity of $\Gamma$ with the higher rank dynamics of the self-joining $(\op{id} \times \rho)(\Gamma)<\op{PSL}_2(\mathbb{C})\times \op{PSL}_2(\mathbb{C})$.
\end{abstract}

\maketitle
\section{Introduction}

Let $\Gamma<\op{PSL}_2(\mathbb{C}) = \Isom^+(\H^3)$ be a finitely generated torsion-free Kleinian group. 
Consider the following discreteness locus of $\Ga$ in the space of representations of $\Gamma$ into $\PSL_2(\c)$:
$$\mathfrak R_{\op{disc}}(\Ga)=\{
\rho:\Ga \to \PSL_2(\bc): \text{discrete, faithful}\};$$
each $\rho\in \mathfrak R_{\op{disc}}(\Ga)$ gives  rise to a hyperbolic manifold
 $\rho(\Ga)\ba \bH^3$ which is homotopy equivalent to $\Ga\ba \bH^3$. 
Another commonly used notation for $\mathfrak R_{\op{disc}}(\Ga) $
is $\cal{AH}(\Gamma)$ where $\cal H$ stands for hyperbolic and $\cal A$ for the topology on
this space given by the algebraic convergence (cf. \cite{Thurston1986hyperbolic}).

We denote by $\Mob(\S^2)$ the group of all M\"obius transformations on $\S^2$, by which we mean the group generated by inversions with respect to circles in $\S^2$. As well-known,
$\Mob(\S^2)$ is equal to the group of conformal automorphisms of $\S^2$. The group $\PSL_2(\c)$
can be identified with the subgroup consisting of compositions of even number of inversions with respect to circles in $\S^2$; in particular,
it is a normal subgroup of $\Mob(\S^2)$ of index two. This means that
conjugations by elements of $ \Mob(\S^2)$ are contained in
$\mathfrak R_{\op{disc}}(\Ga)$; we call them {\it trivial} elements of $\dGa$. Note that $\rho\in \mathfrak R_{\op{disc}}(\Ga)$ is trivial if and only if $\Ga\ba \bH^3$ and $\rho(\Ga)\ba \bH^3$ are isometric to each other.

The rigidity question on $\Gamma$ concerns a criterion on
when a given representation $$\rho\in \dGa$$ is trivial. 
Denote by $\La\subset \S^2$ the limit set of $\Ga$, that is, the set of all accumulation points of $\Ga (o)$, $o\in \bH^3$.
 A $\rho$-equivariant  continuous embedding 
$$f : \La \to \S^2$$  is called a $\rho$-boundary map. 
 There can be at most one $\rho$-boundary map. Two important class of representations admitting boundary maps are as follows. Firstly,
if both $\Ga$ and $\rho(\Ga)$ are geometrically finite,
and $\rho$ is type-preserving, then the $\rho$-boundary map always exists by Tukia \cite{Tukia1985isomorphisms}. Secondly,
if $\rho$ is a quasiconformal deformation of $\Ga$, i.e., there exists a quasiconformal homeomorphism $F:\S^2\to \S^2$ such that for all $\ga\in \Ga$,
$\rho(\ga)= F\circ \ga \circ F^{-1}$, 
then  the restriction of $F$ to $\La$ is the $\rho$-boundary map.

The fundamental role played by the boundary map in the study of rigidity of $\Ga$ is well-understood,  going back to the proofs of Mostow's and Sullivan's rigidity theorems (\cite{Mostow1968quasiconformal}, \cite{Mostowbook}, \cite{Sullivan1981ergodic}). 
By the Ahlfors measure conjecture (\cite{Ahlfors1964finitely}, \cite{Ahlfors1966fundamental}) now confirmed by
the works of Canary \cite{Canary1993ends}, Agol \cite{agol2004tameness} and Calegari-Gabai \cite{Calegari2006shrinkwrapping}, the limit set $\La$ is either all of $\S^2$ or of Lebesgue measure zero.  Mostow rigidity theorem  (\cite{Mostow1968quasiconformal}, \cite{Mostowbook}, \cite{Prasad1973}) says  that if $\Ga$ is a lattice, that is, if $\Ga\ba \bH^3$ has finite volume,
then any $\rho\in \mathfrak R_{\op{disc}}(\Ga)$ is trivial; he obtained this by showing that the $\rho$-boundary map has to be conformal on $\S^2$.
More generally, for any finitely generated Kleinian group $\Ga$ with $\La=\S^2$,  Sullivan showed that any quasiconformal deformation of $\Ga$ is trivial \cite{Sullivan1981ergodic}. In fact, Sullivan's original theorem says that
any $\rho$-equivariant quasiconformal homeomorphism of $\S^2$ which is conformal on the ordinary set $\Omega=\S^2-\La$ is a M\"obius transformation. However Ahlfors measure conjecture implies that this is meaningful only when $\La=\S^2$ (cf. \cite[Section 3.13]{Marden2016hyperbolic}).

In this paper, we concern the case when $\La\ne \S^2$. For example, any geometrically finite Kleinian group which is not a lattice satisfies $\La\ne \S^2$ \cite{Sullivan1984entropy}. We prove that if the $\rho$-boundary map is {\it{conformal on $\La$}}, then $\rho$ is trivial, provided  the ordinary set $\Omega=\S^2-\La$ has at least two connected components. 
By the ``conformality of $f$ on $\La$'', we mean that $f$ maps {\it circles in $\La$} into circles. 
\subsection*{Circular slices}

The main result of this paper is the following rigidity theorem in terms of the behavior of $f$ on circular slices of $\La$: a circular slice of $\La$ is a subset of the form $C\cap \La$ for some circle
$C\subset \S^2$. We denote by $\cal C_\La$ the space of all circles in $\S^2$ meeting $\La$.

\begin{theorem}\label{m1} Let $\Ga<\PSL_2(\c)$ be a finitely generated Zariski dense Kleinian group whose ordinary set $\Omega$ has at least two components.  Let $\rho\in \dGa$ be a Zariski dense representation with boundary map $f:\La\to \S^2$. 

If $f$ maps every circular slice of $\La$ into a circle,
then $\rho$ is a conjugation by some $g\in \op{M\ddot{o}b}(\S^2)$ and $f=g|_\La$. 

Moreover, unless $\rho$ is a conjugation, the following subset of $\cal C_\La$
\be\label{ccc} \{C\in \cal C_\La: f(C\cap \La)\text{ is contained in a circle}\}\ee
has empty interior.
\end{theorem}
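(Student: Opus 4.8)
The plan is to prove the \emph{moreover} statement, as the first assertion follows from it: if $f$ maps every circular slice into a circle, then the set \eqref{ccc} is all of $\cal C_\La$ and in particular has nonempty interior, so the contrapositive of the \emph{moreover} statement forces $\rho$ to be a conjugation (with $f=g|_\La$ by uniqueness of the boundary map). So assume that \eqref{ccc} contains a nonempty open set $U\subset\cal C_\La$, and let me aim to show $\rho$ is a conjugation by some $g\in\Mob(\S^2)$. Write $G=\PSL_2(\c)$, $W=G\times G$, and let $\Delta=(\id\times\rho)(\Ga)<W$ be the self-joining. Since $\Ga$ and $\rho(\Ga)$ are each Zariski dense in $G$, I would first isolate a dichotomy via Goursat's lemma and the simplicity of $G$: either $\Delta$ is Zariski dense in $W$, or its Zariski closure is the graph of an algebraic automorphism $\sigma$ of $G$, so that $\rho=\sigma|_\Ga$. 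Every algebraic automorphism of $G=\PSL_2(\c)$, regarded as a real algebraic group, is either inner or the composition of an inner one with complex conjugation; the former is a conjugation by an element of $\PSL_2(\c)$ and the latter is realized by conjugation by the anti-holomorphic map $z\mapsto\bar z\in\Mob(\S^2)$. Thus in the non-Zariski-dense case $\rho$ is already a conjugation by some $g\in\Mob(\S^2)$ with $f=g|_\La$, and it remains only to establish
\[
\text{$U\subset\cal C_\La$ open and nonempty}\ \Longrightarrow\ \text{$\Delta$ is not Zariski dense in $W$}.
\]

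Next I would record the equivariant structure together with its cross-ratio reformulation. Using the hypothesis that $\Om$ has at least two components — which forces circles crossing between two components of $\Om$ to meet $\La$, yielding a nonempty open set of circles meeting $\La$ in at least four points (indeed a perfect set) — I would shrink $U$ so that every $C\in U$ meets $\La$ in such a set. For such $C$ the image $f(C\cap\La)$ has at least three points, hence lies on a unique circle $\Psi(C)$, and $\rho$-equivariance of $f$ gives $\Psi(\ga C)=\rho(\ga)\Psi(C)$ whenever $\ga C\in U$. Since four points are concyclic exactly when their cross-ratio is real, membership $C\in U$ says precisely that $f$ preserves the reality of the cross-ratio of every quadruple of points of $\La$ lying on a circle in $U$; this is the link to McMullen's tetrahedra, four ideal vertices bounding a zero-volume tetrahedron being exactly a concyclic quadruple.

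The dynamical heart is to promote this condition on the open set $U$ to a global algebraic constraint. Geometrically, for each $C\in U$ the inclusion $f(C\cap\La)\subset\Psi(C)$ means that the part of the limit set of $\Delta$ lying over $C\cap\La$ is contained in $C\times\Psi(C)=\partial(P_C\times P_{\Psi(C)})$, where $P_C,P_{\Psi(C)}\subset\bH^3$ are the geodesic planes bounded by $C$ and $\Psi(C)$. In the self-joining space $\Delta\backslash W$ this traps a piece of the $\Delta$-limit set inside the product plane $P_C\times P_{\Psi(C)}$, i.e. it traps the orbit of the corresponding $(\PSL_2(\R)\times\PSL_2(\R))$-direction (the diagonal copy of $\PSL_2(\R)$ when $\rho$ is a conjugation). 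The plan is then to invoke rigidity of orbit closures for this product $\PSL_2(\R)$-action on $\Delta\backslash W$ — the higher-rank input — to conclude that an open family (parametrized by $U$) of such trapped orbits can coexist only if the full limit set of $\Delta$ lies in a single $\Delta$-invariant proper homogeneous subvariety of $\S^2\times\S^2$, equivalently only if $\Delta$ is not Zariski dense in $W$. Combined with the dichotomy of the first paragraph, this identifies $f$ with the restriction of a holomorphic or anti-holomorphic Möbius map $g$, whence $\rho(\ga)=g\ga g^{-1}$ on the Zariski dense $\Ga$ and $\rho$ is a conjugation.

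The main obstacle is exactly this last dynamical step: establishing the orbit-closure rigidity for the self-joining $\Delta\backslash W$ and extracting from it that an \emph{open} family of trapped product-plane orbits forces a drop in the Zariski closure of $\Delta$. Because $\Ga$ is only assumed finitely generated — not a lattice and not a priori geometrically finite — I cannot appeal to Ratner's measure-rigidity theorems directly, and must instead control a genuinely rank-two phenomenon on $\Delta\backslash W$ while all of the data ($f$, the slices, the cross-ratios) live on the measure-zero fractal $\La$. Managing this interplay, between an open family of circular slices on the $\Ga$-side and recurrence of product-plane orbits on the self-joining side, is where the real work lies.
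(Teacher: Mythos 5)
Your reduction of the first assertion to the \emph{moreover} assertion, and your dichotomy for the self-joining $\Delta=(\id\times\rho)(\Ga)$ (Goursat plus simplicity of $\PSL_2(\c)$, so that failure of Zariski density forces $\rho$ to be a conjugation by an element of $\op{M\ddot{o}b}(\S^2)$), are both correct and coincide with the paper's Lemma \ref{Zdense} and the overall strategy. The genuine gap is the entire dynamical core: you reduce everything to the implication ``an open family of trapped product-plane orbits forces $\Delta$ to be non-Zariski-dense,'' propose to obtain it from ``rigidity of orbit closures for the product $\PSL_2(\R)\times\PSL_2(\R)$-action on $\Delta\backslash W$,'' and then concede that you cannot establish such rigidity. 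No such classification is available in this setting --- as you yourself note, Ratner-type theorems do not apply since $\Delta$ has infinite covolume, and the relevant dynamics here is diagonal rather than unipotent --- so the proposal stops exactly where the proof has to begin.

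The paper's mechanism is different and needs no orbit-closure classification; it needs only \emph{topological transitivity} together with a \emph{single obstruction torus}, and the contradiction between them. Concretely: (1) assuming $\Delta$ is Zariski dense, Dang's theorem (Theorem \ref{dense}) gives a dense $A_u^+$-orbit, for $u$ in the interior of the limit cone, in the non-wandering set $\cal R_\Delta$, and combining this with Prasad--Rapinchuk (Theorem \ref{PR}) one produces a dense subset $\tilde\La_\Delta$ of the limit set such that \emph{every} torus meeting $\tilde\La_\Delta$ has dense $\Delta$-orbit in the space $\cal T_\Delta$ of tori meeting $\La_\Delta$ (Theorem \ref{densec}). (2) In the opposite direction, the hypotheses ``finitely generated'' and ``$\Omega$ disconnected'' enter only through the Koebe--Maskit theorem (Theorem \ref{koebe}): the summability $\sum_i \diam(\Omega_i)^\alpha<\infty$ makes $\La$ \emph{doubly stable} (Lemma \ref{stable}), i.e.\ through each $\xi\in\La$ there is a circle $C$ such that any sequence of circles $C_i\to C$ satisfies $\#\limsup(C_i\cap\La)\ge 2$. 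Pairing such a $C$ with a circle $D$ touching $\La_{\rho(\Ga)}$ at exactly one point (an extremal-circle construction) yields a torus $T=(C,D)\in\cal T_\rho$ that cannot lie in $\overline{\Delta T_0}$ for \emph{any} trapped torus $T_0=(C_0,D_0)$ with $f(C_0\cap\La)\subset D_0$: otherwise double stability of $C$ and injectivity of $f$ would force $D$ to meet $\La_{\rho(\Ga)}$ in at least two points (Step 3 of Theorem \ref{m3}). Since your open set $U$ must meet the dense set underlying $\tilde\La_\Delta$, some trapped torus has dense orbit by (1), contradicting (2); hence $\Delta$ is not Zariski dense. Note also that your intermediate claim --- that disconnectedness of $\Omega$ yields an open set of circles meeting $\La$ in a perfect set --- is not the property actually needed; what is needed is stability of $C\cap\La$ under \emph{limits} of circles, and that is precisely what the Koebe--Maskit diameter bound delivers.
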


We call $\La$ {\it doubly stable} if for any $\xi\in \La$,
there exists a circle $C\ni \xi$ such that
for any sequence of circles $C_i$ converging to $C$, $\#\limsup (C_i\cap \La)\ge 2$.
The assumption that $\Ga$ is finitely generated with $\Omega$ disconnected was used only to ensure that
 $\La$ is doubly stable (Lemma \ref{stable}, Theorem \ref{m3}). 
\begin{rmk} \label{rmk.geomfin}
\begin{enumerate}
   \item This theorem holds rather trivially when $\Lambda=\S^2$, in which case all circular slices of $\La$ are  circles.

\item If $\Ga<\PSL_2(\bc)$ is geometrically finite  with connected limit set, then $\Omega$ is disconnected (cf. \cite[Chapter IX]{Maskit1988Kleinian}); hence Theorem \ref{m1} applies.

\end{enumerate}
\end{rmk}

\subsection*{Tetrahedra of zero-volume}

A quadruple of points in $\S^2$ determines an ideal tetrahedron of the hyperbolic 3-space $\mathbb H^3$. Gromov-Thurston's proof of Mostow rigidity theorem for closed hyperbolic $3$-manifolds uses the fact  that a homeomorphism of $\S^2$ mapping vertices of a maximal volume tetrahedron to vertices of a maximal volume tetrahedron is a M\"obius transformation (\cite{Gromov1981hyperbolic} \cite[Chapter 6]{thurston2022geometry}). In view of this, Curtis McMullen asked us whether one can consider the other extreme type of tetrahedra, namely, those of zero-volume in the study of rigidity of~$\Ga$.  

Noting that $f:\La\to \S^2$ maps every circular slice of $\La$ into a circle if
and only if $f$ maps any quadruple of points in $\La$ lying in a circle
into a circle,
the following is a reformulation of Theorem \ref{m1}, which answers McMullen's question in the affirmative:

\begin{theorem}\label{main2}
Let $\Ga, \rho$ be as in Theorem \ref{m1}.
If the $\rho$-boundary map $f:\La\to \S^2$ maps vertices of every tetrahedron of zero-volume to vertices of  a tetrahedron of zero-volume, then
$f$ is the restriction of a M\"obius transformation $g$ and $\rho$ is the conjugation by $g$.
\end{theorem}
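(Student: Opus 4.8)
The plan is to deduce Theorem \ref{main2} directly from Theorem \ref{m1}, the only point being to translate the hypothesis phrased in terms of zero-volume tetrahedra into the hypothesis phrased in terms of circular slices. The bridge is the classical fact that four distinct points of $\S^2=\partial_\infty \bH^3$ span an ideal tetrahedron of zero volume if and only if they are concyclic, i.e. lie on a common circle in $\S^2$. First I would recall why this holds. Applying a M\"obius transformation (which preserves both the volume and concyclicity, since $\PSL_2(\c)$ acts on $\bH^3$ by isometries and on $\S^2$ by circle-preserving maps), one normalizes three of the vertices to $0,1,\infty$; the congruence type of the tetrahedron is then recorded by the cross-ratio $z$ of the four vertices, and its volume is a sum of three values of the Lobachevsky function at the dihedral angles determined by $z$. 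These angles degenerate, and the volume vanishes, exactly when $z$ is real; and $z$ is real precisely when the four points are concyclic. This gives the desired equivalence.

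With this dictionary in hand, the hypothesis of Theorem \ref{main2} --- that $f$ carries the vertices of every zero-volume tetrahedron to the vertices of a zero-volume tetrahedron --- is equivalent to the statement that $f$ sends every concyclic quadruple in $\La$ to a concyclic quadruple in $\S^2$. I would then show this matches the hypothesis of Theorem \ref{m1}, namely that $f$ maps every circular slice $C\cap\La$ into a single circle. One implication is immediate, since any four points of $C\cap\La$ are concyclic. For the converse, fix a circle $C$; if $\#(C\cap\La)\le 3$ there is nothing to prove, as at most three points are always concyclic. Otherwise choose three distinct points $p_1,p_2,p_3\in C\cap\La$. Since $f$ is an embedding their images are distinct and hence lie on a unique circle $C'$. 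For any further point $p_4\in C\cap\La$, the quadruple $\{p_1,p_2,p_3,p_4\}$ is concyclic, so by hypothesis its image is concyclic; as three points determine $C'$ uniquely, this forces $f(p_4)\in C'$. Therefore $f(C\cap\La)\subset C'$, as needed.

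Once the hypotheses are matched, Theorem \ref{m1} applies and yields that $\rho$ is the conjugation by some $g\in\Mob(\S^2)$ with $f=g|_\La$, which is exactly the conclusion of Theorem \ref{main2}.

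The statement is thus a reformulation rather than an independent result, so I do not expect a genuine obstacle beyond Theorem \ref{m1} itself. The only points demanding care are the volume-versus-concyclicity equivalence and the upgrade, via the injectivity of $f$, from preservation of concyclic quadruples to preservation of entire circular slices; the degenerate cases (slices meeting $\La$ in at most three points, or circles tangent to $\La$) cause no trouble since any set of at most three points is automatically concyclic.
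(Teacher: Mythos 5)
Your proposal is correct and matches the paper's treatment: the paper presents Theorem \ref{main2} as a direct reformulation of Theorem \ref{m1}, based on exactly the dictionary you describe (zero-volume ideal tetrahedron $\Leftrightarrow$ real cross-ratio $\Leftrightarrow$ concyclic vertices, and concyclic-quadruple preservation $\Leftrightarrow$ circular-slice preservation). You simply spell out the details --- the Lobachevsky-function justification and the three-points-determine-a-circle upgrade --- that the paper leaves implicit.
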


\subsection*{Cross ratios} Theorem \ref{main2} can also be stated in terms of cross ratios: 
note that for four distinct points $\xi_1, \xi_2, \xi_3, \xi_4\in \hat \c$,
the cross ratio $[\xi_1: \xi_2: \xi_3 : \xi_4]$
is a real number if and only if
all $\xi_1, \xi_2, \xi_3, \xi_4$ lie in a circle.

\begin{corollary}
Let $\Ga, f$ be as in Theorem \ref{m1}.
 If
$[f(\xi_1) : f(\xi_2): f(\xi_3): f( \xi_4)]\in \br$ for any distinct
$\xi_1, \xi_2, \xi_3, \xi_4\in \La$ with $[\xi_1: \xi_2: \xi_3: \xi_4]\in \br$, 
then $f$ extends to a M\"obius transformation on $\hat\c$.  
\end{corollary}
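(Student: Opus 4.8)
The plan is to deduce the corollary directly from Theorem~\ref{m1} by unwinding two elementary equivalences, so that essentially all of the substance already resides in that theorem. Here $\rho$ denotes the Zariski dense representation of which $f$ is the boundary map.

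First I would translate the cross-ratio hypothesis into a statement about circles. Recall the classical fact, noted in the excerpt, that four distinct points of $\hat\c$ lie on a common circle (with lines counted as circles through $\infty$) precisely when their cross ratio is real. Since $f$ is an embedding, it carries any four distinct points of $\La$ to four distinct points of $\S^2$, so this fact applies verbatim on both the domain and the target sides. Applying it to $\xi_1,\xi_2,\xi_3,\xi_4$ and to $f(\xi_1),\dots,f(\xi_4)$, the hypothesis ``$[f(\xi_1):f(\xi_2):f(\xi_3):f(\xi_4)]\in\br$ whenever $[\xi_1:\xi_2:\xi_3:\xi_4]\in\br$'' becomes exactly the geometric statement that $f$ sends every concyclic quadruple in $\La$ to a concyclic quadruple in $\S^2$.

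Second, I would upgrade this from quadruples to full circular slices. Let $C$ be a circle with $\#(C\cap\La)\ge 4$; I claim $f(C\cap\La)$ lies on a single circle. The key point is that any three distinct points of $\S^2$ span a unique circle: fixing three points of $f(C\cap\La)$ and letting $C'$ be the circle they determine, the quadruple hypothesis forces each remaining point of $f(C\cap\La)$ onto $C'$, whence $f(C\cap\La)\subset C'$. (When $\#(C\cap\La)\le 3$ the conclusion is automatic, since any three points are concyclic.) This is precisely the equivalence, recorded in the excerpt, between ``$f$ maps every circular slice of $\La$ into a circle'' and ``$f$ maps concyclic quadruples to concyclic quadruples.''

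Combining the two reductions, the hypothesis of the corollary is equivalent to the hypothesis of Theorem~\ref{m1}. That theorem then produces $g\in\Mob(\S^2)$ with $\rho$ equal to conjugation by $g$ and $f=g|_\La$; in particular $g$ is a M\"obius transformation of $\hat\c$ extending $f$, which is the desired conclusion. I do not anticipate any serious obstacle here, as the entire analytic and dynamical content lives in Theorem~\ref{m1}; the only points requiring care are the degenerate circular slices with fewer than four points and the use of injectivity of $f$ to keep quadruples distinct (so that the cross ratios remain defined on both sides), both of which are dispatched above.
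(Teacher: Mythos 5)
Your proposal is correct and matches the paper's own (largely implicit) argument: the paper derives this corollary from Theorem~\ref{m1} via precisely the two equivalences you spell out, namely that a quadruple of distinct points is concyclic iff its cross ratio is real, and that mapping concyclic quadruples to concyclic quadruples is the same as mapping every circular slice of $\La$ into a circle. Your explicit handling of slices with at most three points and of the three-points-determine-a-circle upgrade is a slightly more careful write-up of what the paper leaves as a remark, but it is the same proof.
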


\subsection*{On the proof of Theorem \ref{m1}} 
The novelty of our approach is to relate the rigidity question for a Kleinian group $\Ga<\PSL_2(\c)$ with the dynamics of
one parameter diagonal subgroups on the quotient of a higher rank semisimple {\it real} algebraic group $G=\PSL_2(\c)\times \PSL_2(\c)$ by a self-joining discrete subgroup.

For a given $\rho\in \dGa$, we consider the following self-joining  of $\Ga$ via $\rho$: 
$$\Ga_\rho=(\text{id}\times \rho)(\Ga)=\{(\ga, \rho(\ga)): \ga \in \Ga\},$$ 
which is a discrete subgroup of $G$. A basic but crucial observation is that $\rho$ is trivial if and only if $\Ga_\rho$
is not Zariski dense in $G$ (Lemma \ref{Zdense}).
Our strategy is then to
 prove that if $f$ maps {\it too many} circular slices of $\La$ into circles, then $\Ga_\rho$ cannot be Zariski dense in $G$. We achieve this by considering the action of $\Ga_\rho$ on the space $\cal T_\rho$ of all tori in the Furstenberg boundary $\S^2\times \S^2$ intersecting the limit set $\La_\rho= \{(\xi, f(\xi))\in \S^2\times \S^2: \xi\in \La\}$. Here a torus means   an ordered pair of circles in $\S^2$.

\begin{enumerate}
    \item On one hand, using the Koebe-Maskit theorem (\cite{Maskit1974Intersection}, \cite{Sasaki1981KoebeMaskit}, see Theorem \ref{koebe})
    and the hypothesis that the ordinary set $\Omega$ has at least $2$ components, we show the existence of a torus $T\in \cal T_\rho$
    such that
    $$T\notin \overline{\Ga_\rho T_0}$$
for any torus $T_0=(C_0, D_0)$ with $f(C_0\cap \La)\subset D_0$; in particular $\overline{\Ga_\rho T_0}\ne \cal T_\rho$.
    
    \item On the other hand, we prove in Theorem \ref{densec} that the Zariski density of $\Ga_\rho$ implies the existence of a dense subset $\tilde \La_\rho$ of $\La_\rho$ such that   $\overline{\Ga_\rho T_0}= \cal T_\rho$
    for
    any torus $T_0$ meeting $\tilde \La_\rho$. Denoting by $A$ the two dimensional diagonal subgroup of $G$, the main ingredients for this step are  the existence of a dense orbit  of some {\it regular} one-parameter diagonal semigroup in the non-wandering set of the $A$-action on $\Ga_\rho\ba G$
    (Theorem \ref{dense}) as well as  a theorem of Prasad-Rapinchuk \cite{Prasad2003existence} on the existence of $\br$-regular elements (Theorem \ref{PR}). Therefore, if the subset \eqref{ccc} has non-empty interior,
   we can find a torus $T_0=(C_0, D_0)$ satisfying that $f(C_0\cap \La)\subset D_0$ and $\overline{\Ga_\rho T_0}=\cal T_\rho$.
\end{enumerate}

The incompatibility of (1) and (2) implies that either
 the subset \eqref{ccc} has empty interior or $\Ga_\rho$ is not Zariski dense in $G$, as desired.

\subsection*{Question} There are several different proofs of Mostow rigidity theorem (\cite{Mostow1968quasiconformal}, \cite{Mostowbook}, \cite{Prasad1973}). By
the viewpoint suggested in this paper, it will be interesting to find yet another proof, which directly shows
 the following reformulation: for any lattice $\G<\PSL_2(\c)$ and $\rho\in \dGa$, the self-joining $\Ga_\rho$ is not Zariski dense in $\PSL_2(\c)\times \PSL_2(\c)$.

\subsection*{Acknowledgements}
We would like to thank Curt McMullen for asking the question formulated as Theorem \ref{main2} as well as for useful comments on the preliminary version.
We would also like to thank Yair Minsky for useful conversations.

\section{Dense orbits in the space of Tori}

  Let $G=\PSL_2(\c)\times \PSL_2(\c)$ and let
  $X=\bH^3\times \bH^3$ be the Riemannian product of two hyperbolic $3$-spaces.  It follows from $\PSL_2(\c)\simeq \op{Isom}^+(\bH^3)$ that $G\simeq \op{Isom}^{\circ}(X)$.
  In the whole paper, we regard $G$ as a {\it real} algebraic group and the Zariski density of a subset of $G$ is to be understood accordingly.
  The action of $\PSL_2(\c)$ on $\bH^3$ extends continuously to the compactification
 $\bH^3\cup \partial \bH^3$ and  its action on $\partial\bH^3\simeq\S^2$ is given by the M\"obius transformation action of $\PSL_2(\bc)$ on $\S^2$.
 We set $\cal F=\S^2\times \S^2$, which coincides with the so-called Furstenberg boundary of $G$.  Note that $\cal F$ is not the geometric boundary of $X$.  Clearly, the action of $G$ extends continuously to the compact space $X\cup \cal F$.
 
 For a Zariski dense subgroup $\Delta$ of $G$,
 its limit set $\Lambda_\Delta\subset \cal F$
 is defined as all possible accumulation points of $\Delta (o)$, $o\in X$,
 {\it on $\cal F$}.
 It is a {\it non-empty} $\Delta$-minimal subset of $\cal F$ 
 (\cite[Section 3.6]{Benoist1997proprietes}, \cite[Lemma 2.13]{lee2020invariant}).

By a torus $T$, we mean an ordered pair  $T=(C_1, C_2)\subset \cal F$ of circles in $\S^2$. The group $G$ acts on the space of tori by extending the action of $\PSL_2(\bc)$ on the space of circles componentwise.
The main goal of this section is to prove the following:
denote by ${\cal T}_{\Delta}$ 
the space of all tori in $\F$ intersecting $ \La_\Delta$.

\begin{theorem} \label{densec} Let $\Delta$ be a 
Zariski dense subgroup of $G$. There exists a dense subset $\tilde{\La}_{\Delta}$ of $\La_{\Delta}$ such that for any torus $T$ with $T\cap \tilde \La_{\Delta} \ne \emptyset$,
the orbit ${\Delta T}$ is dense in  ${\cal T}_{\Delta}$.
\end{theorem}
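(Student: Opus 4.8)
The plan is to prove Theorem \ref{densec} by transferring the problem of density of $\Delta$-orbits on the space of tori $\cal T_\Delta$ to a statement about dynamics of the diagonal subgroup $A \cong (\R)^2$ acting on the homogeneous space $\Delta \ba G$. The key structural observation is that the space of tori carries a natural identification with an $A$-invariant structure: a torus $T = (C_1, C_2)$, where each circle $C_i \subset \S^2 = \partial \bH^3$ bounds a totally geodesic copy of $\bH^2$, corresponds to a maximal flat (or a totally geodesic subspace) in $X = \bH^3 \times \bH^3$, and the stabilizer of an oriented such flat is a conjugate of $A M$ for $M$ the compact centralizer. Thus tori meeting $\La_\Delta$ should correspond to $A$-orbits in the non-wandering set of the $A$-action on $\Delta \ba G$, and density of $\Delta T$ in $\cal T_\Delta$ should translate into density of the corresponding $A$-orbit in this non-wandering set.

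\medskip

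First I would make the correspondence between tori and flats precise, fixing basepoints and orientations so that the space of tori intersecting $\La_\Delta$ is realized $\Delta$-equivariantly inside $\Delta \ba G / M$. Next I would invoke Theorem \ref{dense} (the asserted existence of a dense orbit of a regular one-parameter diagonal semigroup in the non-wandering set of the $A$-action on $\Delta \ba G$) together with the Prasad-Rapinchuk Theorem \ref{PR} on the existence of $\R$-regular elements in $\Delta$. The role of the $\R$-regular element is to produce, inside $\Delta$, a hyperbolic element whose attracting and repelling fixed points in $\F$ are in ``general position,'' so that its action on the space of tori has a genuine source-sink dynamics transverse to the flat directions. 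The dense subset $\tilde\La_\Delta \subset \La_\Delta$ would then be constructed as the set of points lying on a dense $A$-orbit (equivalently, endpoints of flats whose $\Delta$-translates are dense), and I would verify density of $\tilde\La_\Delta$ in $\La_\Delta$ using minimality of the $\Delta$-action on $\La_\Delta$ recalled in the excerpt.

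\medskip

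The core of the argument is then a closing/contraction step: given a torus $T$ meeting $\tilde\La_\Delta$ at a point $\eta$ lying on a dense $A$-orbit, I would show that $\overline{\Delta T} = \cal T_\Delta$. The idea is that since the $A$-orbit through the flat associated to $\eta$ is dense in the non-wandering set, one can find sequences $\ga_n \in \Delta$ and $a_n \in A$ so that $\ga_n a_n$ approximates any prescribed group element modulo $M$; applying the regular semigroup forces the non-flat directions of the tori to contract onto the relevant attracting data, while the flat (the $AM$) directions are absorbed because they preserve the given torus up to its parametrization. Combining the $A$-expansion in transverse directions with the minimality of $\La_\Delta$ allows one to spread the orbit of $T$ across all of $\cal T_\Delta$.

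\medskip

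I expect the main obstacle to be precisely this last transverse-contraction step, namely controlling the component of a torus that is \emph{not} captured by the flat direction. A single flat in $\bH^3 \times \bH^3$ records only a pair of geodesics (endpoints in $\F$), whereas a torus records a pair of full circles, so the fibre of the map (space of tori) $\to$ (non-wandering set) is positive-dimensional and one must argue that the dense $A$-orbit, combined with the $M$-action and the regular dynamics, sweeps out this fibre as well rather than leaving an $A$-invariant proper subvariety of circles fixed. Establishing that the regular element acts with contracting dynamics on \emph{circles} (not merely on points) transverse to the chosen flat, and hence that no proper closed $\Delta$-invariant subset of $\cal T_\Delta$ can contain $T$, is where the Zariski density hypothesis must be used in full force, likely through the irreducibility it provides for the linear action on the relevant tangent/normal bundle.
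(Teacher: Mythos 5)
Your overall strategy (use the dense regular diagonal orbit from Theorem \ref{dense}, take $\tilde\La_\Delta$ to be endpoints of that orbit and its $\Delta$-translates, then push density from the homogeneous space to the space of tori) is the same as the paper's, but there are two genuine gaps. First, your argument can only ever reach tori containing \emph{at least two} points of $\La_\Delta$: a torus corresponds to a frame $[x]$ in the non-wandering set $\cal R_\Delta$ only when both visual points $x^+\neq x^-$ lie in $\La_\Delta$, so the flat-based correspondence says nothing about tori meeting $\La_\Delta$ in exactly one point, and ${\cal T}_\Delta$ is by definition the set of all tori merely \emph{meeting} $\La_\Delta$. The paper needs a separate step (Corollary \ref{cor_dense}) showing that the tori with $\#T\cap\La_\Delta\ge 2$ are dense in ${\cal T}_\Delta$, and this is precisely where Theorem \ref{PR} enters --- not, as you suggest, to provide source--sink dynamics in general position, but to provide a loxodromic $\delta\in\Delta$ whose rotational part $\tau(\delta)$ generates a dense subgroup of $M$; the dense rotations let one spiral a second limit point $\delta_n\delta^{k_n}\eta$ onto any torus meeting $\La_\Delta$ at a single point, producing tori in $\cal T_{\Delta}^{\spadesuit}$ converging to it. Without this step your proof does not prove the stated theorem.

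Second, the ``transverse-contraction'' obstacle you flag at the end (sweeping out the circle directions not recorded by a flat) is not resolved by your proposed irreducibility/normal-bundle argument, and in the paper it never arises in that form. The paper's resolution is purely algebraic: the space of tori is identified with $G/H$ where $H=\PGL_2(\br)\times\PGL_2(\br)$ is the full stabilizer of a torus (note: the stabilizer is $H$, not a conjugate of $AM$, so tori are \emph{not} realized inside $\Delta\backslash G/M$ as you assert). Writing $T=gH$ and using only the contraction $a_{-t_iu}na_{t_iu}\to e$ of the horospherical part together with $AM$-invariance of $\cal R_\Delta$, one gets $\overline{[g]hA_u^+}\supset\cal R_\Delta$; then, since $A_u^+\subset H$, right multiplication by $H$ immediately yields $\overline{[g]H}\supset \cal R_\Delta H$, and $\cal R_\Delta H$ is exactly the set of frames over tori with two limit points. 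No contracting dynamics on circles is needed --- the positive-dimensional fibre you worry about is absorbed by the group $H$ itself. You would need to supply both this mechanism (or a working substitute) and the one-point-tori approximation to complete the proof.
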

This theorem may be viewed as a higher rank analogue of \cite[Theorem 4.1]{McMullen2017geodesic}.
The rest of this section is devoted to its proof. 
It is convenient to use the upper half-space model of $\bH^3$ so that
$\partial \bH^3=\c\cup\{\infty\}$.
The visual maps $G\to \cal F$, $g\mapsto g^{\pm}$, are defined as follows:
for $g=(g_1, g_2)\in G$ with $g_i\in \PSL_2(\c)$,
$$g^+=(g_1(\infty), g_2(\infty)) \;\; \text{ and } \;\; g^-=(g_1(0), g_2(0)) .$$

For $t\in \bc$,
we set $a_t=\text{diag}( e^{t/2}, e^{-t/2})$ and define the following subgroups of $G$:
$$ A =\{(a_{t_1}, a_{t_2}) : t_1, t_2\in \br \}\text{ and }
 M =\{(a_{t_1}, a_{t_2}): t_1, t_2\in i \br\} . $$

For  $u=(u_1, u_2)\in \br^2$, we write $a_u=(a_{u_1}, a_{u_2})$
and consider the following one-parameter semisubgroup $$A_u^+=\{a_{tu}: t\ge 0\}.$$

A loxodromic element $h\in \PSL_2(\bc)$
 is of the form
$h=\varphi a_{t_h} m_h \varphi^{-1}$ where $t_h>0$ and $m_h\in \op{PSO}(2)$
are uniquely determined and $\varphi\in \PSL_2(\bc)$. We call $t_h>0$ the Jordan projection of $h$ and $m_h$ the rotational component of $h$.
The attracting and repelling fixed points of $h$ on $\S^2$ are given by $y_h=\varphi (\infty) $ and $y_{h^{-1}}=\varphi (0)$, respectively.

For a loxodromic element $g=(g_1, g_2)\in G$, that is, each $g_i$ is loxodromic, its Jordan projection $\lambda(g)$ and the rotational component $\tau(g)$
are defined componentwise:
$\lambda(g)=(t_{g_1}, t_{g_2}) \in \br^2_{>0}$ 
and $\tau(g)=(m_{g_1}, m_{g_2}) \in M$.

\subsection*{Dense $A_u^+$-orbit}
For a Zariski dense subgroup $\Delta$ of $G$,
we consider the following $AM$-invariant subset $$\cal R_\Delta=\{[g]\in \Delta\ba G: g^+, g^-\in \Lambda_{\Delta}\}.$$

Let $\L=\L_\Delta\subset \br_{\ge 0}^2$ denote the limit cone of $\Delta$, which is the smallest closed cone containing the Jordan projection $\lambda(\Delta)=\{\lambda(\delta):\delta\in \Delta\}$.  The Zariski density of $\Delta$ implies that $\L$ has non-empty interior \cite[Section 1.2]{Benoist1997proprietes}.

We use the following theorem which is an immediate consequence of the result of Dang \cite{dang2020topological} (this also follows from \cite{chow2021local} and \cite{burger2021hopf}):
\begin{theorem} \label{dense} For any Zariski dense subgroup $\Delta<G$
and any $u\in \inte \L_\Delta$,
there exists a dense $A_u^+$-orbit in $\cal R_\Delta$.
\end{theorem}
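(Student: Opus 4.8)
\emph{Strategy.} The plan is to realize $\cal R_\Delta$ as the non-wandering set of the one-parameter directional flow $\{a_{tu}:t\in\br\}$ on $\Delta\ba G$ and then to invoke Dang's topological transitivity theorem for Weyl chamber flows in directions interior to the limit cone. First I would record the structural features of $\cal R_\Delta$ needed for a category argument: it is closed and $AM$-invariant, and being a closed subset of the locally compact second countable space $\Delta\ba G$ it is a Baire space. Moreover it has no isolated $A$-orbits: by Zariski density and Benoist's description of the limit cone, $\Delta$ contains loxodromic elements whose Jordan projection $\lambda(\delta)$ lies in an arbitrarily small cone around the ray $\br_{\ge 0}u\subset\inte\L_\Delta$, and these accumulate throughout $\cal R_\Delta$.

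\emph{Identifying the non-wandering set.} The core geometric input is the identification of $\cal R_\Delta$ with the non-wandering set $\Omega_u$ of the flow $a_{tu}$ for $u\in\inte\L_\Delta$. One inclusion is soft: each loxodromic $\delta=(\delta_1,\delta_2)\in\Delta$ with $\lambda(\delta)\in\br_{>0}u$ gives a periodic $a_{tu}$-orbit, represented by some $[g]$ whose attracting and repelling data $g^+,g^-$ are the fixed points of $\delta$ and therefore lie in $\La_\Delta$; such periodic orbits are non-wandering and, letting the direction of $\lambda(\delta)$ vary over a neighborhood of $u$ in $\inte\L_\Delta$, their union is dense in $\cal R_\Delta$. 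For the reverse inclusion I would argue that any $[g]$ with $g^+\notin\La_\Delta$ or $g^-\notin\La_\Delta$ is wandering, using that $a_{tu}$ pushes $[ga_{tu}]$ toward $g^+$ and away from $g^-$ (here $u_1,u_2>0$, since $u$ is interior), so that an endpoint outside $\La_\Delta$ forces the orbit to leave every compact recurrent region. The hypothesis $u\in\inte\L_\Delta$ is precisely what makes recurrence occur in the direction $u$; on the boundary of, or outside, the limit cone the recurrent set degenerates, so the interior assumption cannot be dropped.

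\emph{From transitivity to a dense forward orbit.} With $\cal R_\Delta=\Omega_u$ established, Dang's theorem \cite{dang2020topological} (or \cite{chow2021local}, \cite{burger2021hopf}) asserts that $\{a_{tu}\}$ is topologically transitive on $\Omega_u$; equivalently, for every pair of nonempty relatively open $U,V\subset\cal R_\Delta$ there is $t\in\br$ with $a_{tu}U\cap V\ne\emptyset$. Since $\cal R_\Delta$ is second countable, perfect, and Baire, this already produces a residual set of points with dense two-sided orbit. The remaining, and I expect main, obstacle is to upgrade this to a dense forward orbit of the semigroup $A_u^+=\{a_{tu}:t\ge0\}$. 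The point is that every element of $\Omega_u$ is non-wandering, so for a fixed countable basis $\{U_n\}$ one can realize the connecting times in the transitivity condition as arbitrarily large positive times: concatenating the transitivity instant with a large positive return time supplied by non-wandering-ness gives, for all $n,m$, some $t\ge0$ (indeed arbitrarily large) with $a_{tu}U_n\cap U_m\ne\emptyset$. The standard Baire-category construction then yields a point whose forward $A_u^+$-orbit meets every $U_n$, hence is dense in $\cal R_\Delta$, as required.
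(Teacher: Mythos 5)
Your overall skeleton --- quote Dang's theorem, then finish with a Baire-category argument --- is the same as the paper's, but the step you yourself call the ``core geometric input'', namely the identification of $\cal R_\Delta$ with the non-wandering set $\Omega_u$, contains a genuine gap, and it is precisely the step the paper never takes on. Your argument for the inclusion $\cal R_\Delta\subseteq\Omega_u$ goes through periodic orbits: since a given $u\in\inte\L_\Delta$ need not be realized exactly by any Jordan projection (the limit cone is only the \emph{closure} of the cone on $\lambda(\Delta)$), you take loxodromic $\delta\in\Delta$ with $\lambda(\delta)$ in a small cone around $\br_{\ge 0}u$ and assert that these give periodic, hence non-wandering, $a_{tu}$-orbits whose union is dense in $\cal R_\Delta$. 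This is where the argument fails: an element $\delta$ with $\lambda(\delta)$ not parallel to $u$ produces an orbit that is periodic (modulo $M$) only for the flow in its own direction; for the transverse direction $u$, the orbit escapes along the associated flat cylinder $[g]AM$, and there is no a priori reason it ever returns near its starting point --- that depends on the rest of $\Delta$. Non-wandering-ness is not stable under perturbing the flow direction, so density of points that are recurrent for nearby directions $u'$ gives nothing for the fixed direction $u$. (Two smaller wrinkles in the same vein: even when $\lambda(\delta)\in\br_{>0}u$, the orbit is periodic only modulo the rotation part $\tau(\delta)\in M$, which is harmless by compactness of $M$; and your later upgrade argument needs points of $\Omega_u$ to be non-wandering \emph{relative to} $\Omega_u$, not merely in $\Delta\ba G$.) Establishing that every point of $\cal R_\Delta$ is recurrent for a fixed interior direction is a real theorem, proved by constructing returning group elements directly from Zariski density, not from periodic orbits; it is part of what the cited works contain.

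The gap is avoidable, and the paper's one-line proof shows how: it quotes \cite[Theorem 7.1 and its proof]{dang2020topological} as giving, \emph{directly on} $\cal R_\Delta$, positive-time transitivity of the discrete semigroup $S^+=\{a_u^n: n\ge 0\}$, i.e.\ for any nonempty open $\cal O_1,\cal O_2\subset\cal R_\Delta$ there is $n\in\N$ with $\cal O_1 a_u^n\cap\cal O_2\ne\emptyset$. With the input in this form, both of your extra steps (the identification with a non-wandering set, and the two-sided-to-forward upgrade) are unnecessary: a standard argument (\cite[Proposition 1.1]{Silverman1992maps}) produces a dense $S^+$-orbit in the second countable Baire space $\cal R_\Delta$, and $S^+\subset A_u^+$ finishes. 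For what it is worth, your final two ingredients are fine --- the concatenation trick turning two-sided transitivity plus interior recurrence into arbitrarily-large-positive-time transitivity is correct, as is the Baire construction of a dense forward orbit --- so the proposal stands or falls on the identification step, and as sketched it falls.
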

\begin{proof}
As shown in \cite[Theorem 7.1 and its proof]{dang2020topological},
the semigroup $S^+:=\{a_u^n: n\in \mathbb N\cup \{0\}\}$ acts on $\cal R_\Delta$ topologically transitively: for any non-empty open subsets $\cal O_1, \cal O_2$ of $\cal R_\Delta$,  $\cal O_1 a_u^n \cap \cal O_2\ne\emptyset$
for some $n\in \mathbb N$.
This implies the existence of a dense $S^+$-orbit on $\cal R_\Delta$ (cf. \cite[Proposition 1.1]{Silverman1992maps}). Since $S^+\subset A_u^+$,
this proves the claim.
\end{proof}

 In the following, we fix $u\in \inte\L_{\Delta}$ and
 a dense $A_u^+$-orbit, say $[g_0]A_u^+$, in $\cal R_{\Delta}$, provided by Theorem \ref{dense}. Set 
\be\label{tilde} \tilde \La_\Delta =\Delta g_0^+=\left\{\delta  g_0^+\in \La_{\Delta} : \delta\in \Delta\right\};\ee 
note that this is a dense subset of $ \La_{\Delta}$, as $\La_{\Delta}$ is a $\Delta$-minimal subset.

Denote by $\cal T_{\Delta}^{\spadesuit}$ the space of all tori $T$ with $\# T \cap \La_{\Delta} \ge 2$.

\begin{cor} \label{densetori}
For any torus $T$ meeting $\tilde \La_{\Delta}$,  the closure of
$\Delta T$ contains $\cal T_{\Delta}^{\spadesuit}.$
\end{cor}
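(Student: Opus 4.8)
The plan is to transport the dense semigroup orbit $[g_0]A_u^+\subset\cal R_\Delta$ to the space of tori by a contraction–expansion argument. Since $T$ meets $\tilde\La_\Delta=\Delta g_0^+$ and $\overline{\Delta T}$ is $\Delta$-invariant, I would first replace $T$ by $\delta^{-1}T$ for a suitable $\delta\in\Delta$ and assume outright that $g_0^+\in T$. Writing $T'=g_0^{-1}T$, this is a torus $T'=(C_1',C_2')$ with $(\infty,\infty)\in T'$; that is, each circle $C_j'$ passes through $\infty$ and is therefore a Euclidean line in $\c$.

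Next I would compute the limit of $T'$ under the contracting part of the flow. For $u=(u_1,u_2)\in\inte\L_\Delta\subset\br_{>0}^2$, the element $a_{tu}^{-1}$ acts on the $j$-th factor by the scaling $z\mapsto e^{-tu_j}z$. As $t\to+\infty$ this scaling collapses any line $C_j'$ onto the line $\ell_j$ through $0$ and $\infty$ parallel to it, so that $a_{tu}^{-1}T'\to(\ell_1,\ell_2)=:L$, a torus consisting of two lines through $0$ and $\infty$ that depends only on $T$.

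Now I would feed in the dense orbit. Fix any $h\in G$ with $h^\pm\in\La_\Delta$ and any $a\in A$, $m\in M$; since $AM$ fixes the visual pair, $[ham]\in\cal R_\Delta$, so by density of $[g_0]A_u^+$ there are $\delta_n\in\Delta$ and $t_n\to+\infty$ with $\delta_n g_0 a_{t_n u}\to ham$ in $G$ (the times can be taken to diverge once $[ham]$ is chosen off the orbit, a dense condition). Factoring
\[
\delta_n T=(\delta_n g_0 a_{t_n u})\,(a_{t_n u}^{-1}T')\longrightarrow (ham)\cdot L
\]
by continuity of the $G$-action on tori, I conclude $(ham)L\in\overline{\Delta T}$ for every such $h,a,m$.

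Finally I would read off which tori this produces. Since $a$ preserves each $\ell_j$ while $m=(a_{i\phi_1},a_{i\phi_2})$ rotates $\ell_j$ to an arbitrary line through $0,\infty$, the torus $(am)\cdot L$ ranges over all pairs of lines through $(0,\infty)$; applying $h$ sends these to all pairs $(D_1,D_2)$ of circles with $D_j\ni h_j(0)=(h^-)_j$ and $D_j\ni h_j(\infty)=(h^+)_j$. Thus $\overline{\Delta T}$ contains every torus passing through the two limit points $h^+,h^-\in\La_\Delta$. Letting $h^+,h^-$ range over all pairs of limit points with distinct coordinates in each factor exhausts, up to diagonal degeneracies, all tori meeting $\La_\Delta$ in at least two points, i.e. all of $\cal T_\Delta^\spadesuit$. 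The main obstacle I anticipate is twofold: ensuring the return times $t_n$ genuinely diverge (handled by restricting to the dense set of targets not lying on the orbit and passing to the closure), and covering the degenerate pairs $p,q\in S\cap\La_\Delta$ with $p_1=q_1$ or $p_2=q_2$, where the direct construction of $h$ fails; these I would reach by approximating $S$ with tori through nearby non-degenerate pairs of limit points, using that $p,q$ are accumulation points of $\La_\Delta$.
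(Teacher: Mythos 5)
Your core argument is correct and is essentially the paper's own proof, transported from the homogeneous space $\Delta\ba G$ down to the space of tori. The paper realizes tori as $G/H$ with $H=\PGL_2(\br)\times\PGL_2(\br)$, writes a representative of $T$ as $\delta_0 g_0\,nam$ with $n$ strictly upper triangular, and uses $a_{-t_iu}\,n\,a_{t_iu}\to e$ to conclude $\overline{[g]h'A_u^+}\supset(\cal R_\Delta-[g_0]A_u^+)\,am$, then invokes density of $\cal R_\Delta-[g_0]AM$; your observation that the scaling $a_{tu}^{-1}$ collapses the lines of $T'=g_0^{-1}T$ onto parallel lines through $0,\infty$ is exactly that conjugation estimate in geometric form, and your limit $\delta_nT=(\delta_ng_0a_{t_nu})(a_{t_nu}^{-1}T')\to(ham)L$ along diverging return times (with the correct caveat about targets off the orbit) is the same device. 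Both routes land on the same intermediate conclusion: $\overline{\Delta T}$ contains every torus passing through two points of $\La_\Delta$ that differ in both coordinates.

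The divergence is the endgame, and there your extra caution is warranted but your patch is not adequate as stated. The paper simply asserts the identification $\cal R_\Delta H=\Delta\ba\cal T_\Delta^\spadesuit$, which tacitly assumes that every torus with $\#\,T\cap\La_\Delta\ge2$ contains a pair of limit points in general position; this is automatic in the paper's application, where $\La_\Delta=\La_\rho$ is the graph of the injective boundary map $f$, so any two distinct limit points differ in both coordinates. For a general Zariski dense $\Delta$, however, your ``diagonal degeneracies'' genuinely occur: for $\Delta=\Gamma_1\times\Gamma_2$ one has $\La_\Delta=\La_{\Gamma_1}\times\La_{\Gamma_2}$, and a torus $(D_1,D_2)$ with $D_1$ touching $\La_{\Gamma_1}$ at a single point and $\#\,D_2\cap\La_{\Gamma_2}\ge2$ lies in $\cal T_\Delta^\spadesuit$ yet contains no nondegenerate pair. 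Your proposed repair--approximate such a torus by tori through nearby nondegenerate pairs ``using that $p,q$ are accumulation points''--does not follow from perfectness of $\La_\Delta$ alone: a circle through two limit points near $p$ stays close to $D_1$ only if the chord between them is nearly tangent to $D_1$ at $p_1$, and nothing controls the \emph{direction} in which limit points accumulate. Supplying exactly this directional control is the role of the rotation argument via Prasad--Rapinchuk (Theorem \ref{PR}) in the proof of Corollary \ref{cor_dense}, and it is not clear it can be adapted to approximate an arbitrary degenerate torus. So: restricted to limit sets that are graphs (the only case the paper uses), your proof is complete and coincides with the paper's; for the corollary in the stated generality your last step is a gap--though, to be fair, it is a gap that the paper's one-line identification passes over silently as well.
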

\begin{proof} 
Note that $H=\PGL_2(\br)
\times \PGL_2(\br)$ is a subgroup of $G$, as $\PSL_2(\c)=\PGL_2(\c)$.
The space $\cal T$ of all tori in $\cal F$ can be identified with the quotient space $G/H$.
Let $T$ be a torus containing $\delta_0 g_0^+ \in \tilde \La_\Delta$ for some $\delta_0\in \Delta$. By the identification of $\cal T = G / H,$
we may write $T = gH$ for some $g \in G$.
Then
for some $h\in H$, $(gh)^+=\delta_0 g_0^+$.  If we denote by $P$ the stabilizer subgroup of $(\infty, \infty)$ in $G$, which is equal to the product of two upper triangular subgroups of $\PSL_2(\c)$, this implies that for some $p\in P$, $gh = \delta_0 g_0 p $. Write $p=nam$ where $n$ belongs to the strict upper triangular subgroup, $a\in A$ and $m\in M$.
We  claim that
$\overline{[g]hA_u^+} \supset (\cal R_{\Delta}-[g_0] A_u^+) ma$.
Let $x\in \cal R_{\Delta}-[g_0] A_u^+$. Since $\overline{[g_0]A_u^+} = \cal R_{\Delta}$, there exists a sequence $t_i\to +\infty$ such that
$x=\lim_{i\to \infty} [g_0] a_{t_i u} $.
Since $u=(u_1, u_2)\in \inte \L_{\Delta}$,
we have $u_1>0, u_2>0$, and hence
$a_{-t_i u} n a_{t_i u}\to e$ as $i\to \infty$.

Therefore
$$\lim_{i\to \infty} [g] h a_{t_i u}= \lim_{i\to \infty} [g_0] nam a_{t_iu}=
\lim_{i\to \infty} [g_0] a_{t_iu} (a_{-t_i u} n a_{t_i u})  am
=xam ;$$
so $xam\in \overline{[g]hA_u^+} $.
This proves the claim. Since
$\cal R_{\Delta}$ is $AM$-invariant, and $\cal R_{\Delta}-[g_0]AM$ is dense in $\cal R_{\Delta}$ (as $\Lambda_{\Delta}$ is a perfect set),
it follows that 
$$\overline{[g]hA_u^+} \supset \cal R_{\Delta}.$$

Since $A_u^+\subset H$, this implies that $\overline{[g]H}\supset \cal R_\Delta H.$
Since
$\cal R_\Delta H= \Delta \backslash \cal T_{\Delta}^{\spadesuit}$ and $T=gH$, we get
$\overline{\Delta T}\supset \cal T_{\Delta}^{\spadesuit}$, as desired.
\end{proof}

\subsection*{Loxodromic element $\delta\in \Delta$ with $\tau(\delta)$ generating $M$}
We use the following special case of a theorem of Prasad and Rapinchuk \cite{Prasad2003existence}:
\begin{theorem}{\cite[Theorem 1, Remark 1]{Prasad2003existence}} \label{PR}
Any Zariski dense
subgroup $\Delta<G$ contains a loxodromic element $\delta$ such that $\tau(\delta)$ generates a dense subgroup of $M$.
 \end{theorem}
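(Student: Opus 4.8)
The statement is a special case of the Prasad--Rapinchuk theorem, so the plan is to record the structure of $G$ as a \emph{real} algebraic group, invoke their result in its sharp form, and then transfer Zariski density to topological density in the compact part $M$. First I would set up the torus structure. As a real algebraic group, $G=\PSL_2(\c)\times\PSL_2(\c)$ is connected semisimple of $\R$-rank $2$, and $A$ is a maximal $\R$-split torus. A maximal $\R$-torus containing $A$ is $AM$, where the $\R$-anisotropic part $M\cong(\op{PSO}(2))^2$ is a compact $2$-torus. A loxodromic element of $G$ is precisely an $\R$-regular semisimple element: its centralizer contains a conjugate of $A$ of full split rank $2$ (equivalently, each of its two $\PSL_2(\c)$-coordinates is loxodromic). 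For such $\delta$ there is a unique maximal $\R$-torus $T_\delta$ containing it, $T_\delta$ is $G$-conjugate to $AM$, and under such a conjugation $\delta$ corresponds to $a_{\lambda(\delta)}\tau(\delta)\in AM$, with $a_{\lambda(\delta)}\in A$ its split part and $\tau(\delta)\in M$ its rotational (anisotropic) part.

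Second, I would apply Prasad--Rapinchuk in the strengthened form of \cite[Theorem 1, Remark 1]{Prasad2003existence}: a Zariski dense subgroup $\Delta<G$ contains an $\R$-regular (hence loxodromic) element $\delta$ whose cyclic group $\langle\delta\rangle$ is Zariski dense in the maximal torus $T_\delta$. Fixing a conjugation $T_\delta=gAMg^{-1}$, this says that $\langle a_{\lambda(\delta)}\tau(\delta)\rangle$ is Zariski dense in $AM$. Third, I would push this density down to $M$: the projection $AM\to M$ with kernel $A$ is a surjective morphism of real algebraic groups, and the image of a Zariski dense subgroup under a surjective morphism is Zariski dense, so $\langle\tau(\delta)\rangle$ is Zariski dense in $M$.

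The main obstacle is that Zariski density in $M$ is a priori weaker than the topological density the statement demands, so the final and essential point is to show that these two closures coincide for the \emph{compact} torus $M$. This is exactly where compactness enters: the Euclidean closure $\overline{\langle\tau(\delta)\rangle}$ is a closed subgroup of the compact torus $M$, hence the union of finitely many translates of a subtorus, and each such translate is Zariski closed; therefore $\overline{\langle\tau(\delta)\rangle}$ is itself Zariski closed. Since it contains $\langle\tau(\delta)\rangle$, it contains its Zariski closure, which is all of $M$; thus $\overline{\langle\tau(\delta)\rangle}=M$, as asserted. I expect the only genuinely delicate points to be invoking Prasad--Rapinchuk in the sharp ``Zariski dense in a maximal torus'' form rather than the bare existence of $\R$-regular elements, and this coincidence of the Zariski and Euclidean closures on the compact torus $M$.
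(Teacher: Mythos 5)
Your proposal is correct and takes the same route as the paper: the paper states this result purely as a quotation of Prasad--Rapinchuk \cite[Theorem 1, Remark 1]{Prasad2003existence} and gives no proof of its own, so the only ``approach'' to match is the citation itself. Your derivation---extracting an irreducible $\R$-regular (loxodromic) element whose cyclic group is Zariski dense in its maximal torus $T_\delta \cong AM$, and then upgrading Zariski density of $\langle\tau(\delta)\rangle$ to topological density using that Euclidean-closed subgroups of the compact torus $M$ are Zariski closed---is precisely the content that the citation to Theorem 1 together with Remark 1 is meant to encapsulate.
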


\begin{cor} \label{cor_dense}
If $\Delta$ is Zariski dense in $G$, then $\cal T_{\Delta}^{\spadesuit}$ is dense in ${\cal T}_{\Delta}$.
\end{cor}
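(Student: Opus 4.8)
The plan is to produce, arbitrarily close to any torus meeting $\La_\Delta$, a torus meeting $\La_\Delta$ in at least two points; the engine for this is the loxodromic element $\delta$ supplied by Theorem \ref{PR}, whose rotational component $\tau(\delta)$ equidistributes in $M\cong\op{PSO}(2)\times\op{PSO}(2)$. First I would record two structural facts. The attracting and repelling fixed points $y_\delta,y_{\delta^{-1}}\in\F$ of $\delta$ both lie in $\La_\Delta$, and since the Jordan projection and the rotational component are conjugacy invariants (they are read off from the eigenvalue ratios), every conjugate $\gamma\delta\gamma^{-1}$ with $\gamma\in\Delta$ is again loxodromic with $\tau(\gamma\delta\gamma^{-1})$ generating a dense subgroup of $M$ and with fixed points $\gamma y_\delta,\gamma y_{\delta^{-1}}\in\La_\Delta$. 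As $\La_\Delta$ is $\Delta$-minimal, the orbit $\Delta\, y_\delta$ is dense in $\La_\Delta$.

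This reduces the problem. Given $T_0=(C_1,C_2)\in\cal T_\Delta$ meeting $\La_\Delta$ at a point $\xi$, I would first use the density of $\Delta\, y_\delta$ together with an elementary one-point perturbation---a single prescribed point near $C_i$ is met by some circle close to $C_i$, with no tangency constraint---to replace $T_0$ by an arbitrarily close torus passing through a point $p=\gamma y_\delta$ of $\Delta\, y_\delta$. It therefore suffices to approximate, by elements of $\cal T_{\Delta}^{\spadesuit}$, any torus $T=(C_1,C_2)$ through the attracting fixed point $p$ of a loxodromic $\delta'=\gamma\delta\gamma^{-1}$ whose rotational component is dense in $M$ and whose repelling fixed point also lies in $\La_\Delta$.

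The core construction takes place at $p$. In coordinates placing $p$ at $(0,0)$ and the repelling point at $(\infty,\infty)$, the element $\delta'$ acts by $(z_1,z_2)\mapsto(\nu_1 z_1,\nu_2 z_2)$ with $|\nu_i|<1$ and $\arg\nu_i=\phi_i$, where by the choice of $\delta'$ the pair $(n\phi_1,n\phi_2)$ equidistributes on $(\br/2\pi\Z)^2$. Since $p$ is not isolated in the $\delta'$-invariant set $\La_\Delta$, choosing any $x=(x_1,x_2)\in\La_\Delta$ near $p$ gives a forward orbit $\delta'^{\,n}x=(\nu_1^n x_1,\nu_2^n x_2)\in\La_\Delta$ converging to $p$ along the direction pairs $(\arg x_1+n\phi_1,\arg x_2+n\phi_2)$, which are dense among all pairs of directions. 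Given the tangent directions of $C_1$ and $C_2$ at $p$, I would select $n$ so that $\delta'^{\,n}x$ approaches $p$ tangentially to both circles at once, and then construct a torus $T'$ through the two points $p,\delta'^{\,n}x\in\La_\Delta$ whose circles are close to $C_1,C_2$; such $T'$ lies in $\cal T_{\Delta}^{\spadesuit}$. A diagonal argument over shrinking neighborhoods then delivers the density of $\cal T_{\Delta}^{\spadesuit}$ in $\cal T_\Delta$.

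The main obstacle is exactly this last geometric step. A circle through $p$ and a second point near $p$ is close to $C_i$ only when the approach is tangent to \emph{first} order, whereas matching the curvature of $C_i$ is a \emph{second}-order condition that a single spiral orbit cannot meet, for Diophantine reasons (it would require $\arg(\nu_i^n x_i)$ to hit a moving target within a window exponentially small in $n$). Overcoming this is where the full self-similar structure of $\La_\Delta$ at $p$ must be exploited: its $\delta'$-invariance forces the set of directions along which $\La_\Delta$ accumulates at $p$ to be a closed, rotation-invariant, hence dense, subset, and the coupling of the two factors provided by the density of $\tau(\delta')$ in the entire two-torus $M$ lets one align both coordinates simultaneously. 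Turning this qualitative richness into a genuine $\varepsilon$-approximation---selecting two points of $\La_\Delta$ near $p$ lying on circles $\varepsilon$-close to $C_1$ and $C_2$---is the technical heart of the argument.
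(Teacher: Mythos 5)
Your overall strategy is the paper's strategy: a Prasad--Rapinchuk loxodromic element $\delta$ with $\tau(\delta)$ dense in $M$, orbit points of $\La_\Delta$ spiraling into an attracting fixed point with dense pairs of approach directions, and a resulting pair of points of $\La_\Delta$ placed on circles near $(C_1,C_2)$. However, the step you flag as ``the technical heart'' and leave unresolved is a genuine gap in your argument --- and the obstacle you describe (second-order curvature matching, exponentially small Diophantine windows) is illusory. The fix, which is exactly what the paper does, is that you are not forced to build the approximating circle from the two limit-set points alone: fix \emph{in advance} a third anchor point $p_i\in C_i$ distinct from all the orbit points, and take $C_{i,n}$ to be the circle through the two nearby limit-set points \emph{and} $p_i$. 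The density of $\tau(\delta)$ in $M$ is used only to make the chord of the two nearby points converge to the tangent direction of $C_i$ at the base point --- a purely first-order condition. Then $C_{i,n}$ converges to the circle through the base point and $p_i$ that is tangent to $C_i$ at the base point; but a circle tangent to $C_i$ at one point and passing through a second point of $C_i$ must equal $C_i$. The torus $T_n=(C_{1,n},C_{2,n})$ still contains both limit-set points, hence lies in $\cal T_{\Delta}^{\spadesuit}$, and $T_n\to T$. No curvature control and no equidistribution rate is needed; the anchor point supplies the degree of freedom you were trying to extract from the orbit.

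Two smaller remarks. First, your preliminary reduction (perturbing $T_0$ to pass exactly through some $\gamma y_\delta$) is harmless but unnecessary: the paper instead takes $\delta_n y_\delta\to\xi$ by minimality and, for each $n$, uses the pair $\delta_n y_\delta$, $\delta_n\delta^{k_n}\eta$, which amounts to the same conjugation picture you set up with $\delta'=\gamma\delta\gamma^{-1}$. Second, you use forward powers $\delta'^{\,n}$, so you need the \emph{semigroup} generated by $\tau(\delta')$ to be dense, not just the group; this is easily arranged (replace $\delta$ by $\delta^{-1}$ if necessary, using that $M$ has no isolated points), but it should be said.
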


\begin{proof} Let $\delta =(\delta_1, \delta_2)\in \Delta$ be as given by
 Theorem \ref{PR}. Since $M$ has no isolated point, there exists a sequence $m_j$, which we may assume tends to $+\infty$, by replacing $\delta$ by $\delta^{-1}$ if necessary,  that
 $\tau(\delta)^{m_j}$ converges to $e$.  It follows that the semigroup generated by $\tau(\delta)$ is also dense in $M$.
Let $T = (C_1, C_2)\in \cal T_\Delta$ be any torus. It suffices to construct a sequence $T_n=(C_{1, n}, C_{2, n}) \in \cal T^{\spadesuit}_\Delta$ which converges to $T$.
We begin by fixing a point $\xi=(\xi_1, \xi_2) \in T \cap \La_{\Delta}$. 
Since $\Delta$ acts minimally on $\La_{\Delta}$, there exists a sequence $\delta_n
=(\delta_{1, n}, \delta_{2, n}) \in \Delta$ such that  that $\delta_n y_{\delta} $ converges to
$\xi$ as $n \to \infty$; recall that $y_\delta\in \cal F$ denotes the attracting fixed point of $\delta$.
Fix a point $\eta =(\eta_1, \eta_2) \in \La_{\Delta}-\{y_\delta, y_{\delta^{-1}}\}$.  

For each fixed $n \in \N$,  note that, as $k \to \infty$, the sequence
$\delta_{n} \delta^k \eta $ converges to $\delta_{n} y_{\delta}$, while rotating around $\delta_n y_{\delta}$ by the amount given by $ \tau(\delta)^k$.
Since $\tau(\delta)$ generates a dense semigroup of $M$,
we can find a sequence $k_n\to \infty $ such that for each $i=1,2$,
$$d(\delta_{i, n} y_{\delta_{i}}, \delta_{i, n} \delta_{i}^{k_n} \eta_i ) < \tfrac{1}{n} \quad \mbox{and} \quad \tfrac{\pi}{2} - \tfrac{1}{n} < \theta_{i, n} < \tfrac{\pi}{2} + \tfrac{1}{n}$$
where $\theta_{i, n}$ is the angle at $\delta_{i, n}y_{\delta_i}$ of the triangle determined by 
the center of $C_i$, $ \delta_{i, n}y_{\delta_i}$ and $\delta_{i, n}\delta_i^{k_n}\eta_i$. 
For each $i = 1, 2$, we now choose $p_i \in C_i - \bigcup_n \{\delta_{i, n}y_{\delta_i}, \delta_{i, n}\delta_i^{k_n}\eta_i\}$ and set $C_{i, n}$ to be the circle passing through $\delta_{i, n}y_{\delta_i}, \delta_{i, n}\delta_i^{k_n}\eta_i$ and $p_i$. 

From the construction, each sequence $C_{i, n}$ converges to the circle tangent to $C_i$ at $\xi_i$ and passing through $p_i\in C_i$, which must be equal to $C_i$ itself; therefore  if we
 set $T_n=(C_{1, n}, C_{2, n})$,
$$T_n \to T \quad \text{as} \quad n\to \infty.$$
Since $T_n\cap \La_{\Delta}$ contains both
$\delta_n y_{\delta}$ and $\delta_n \delta^{k_n} \eta$,  we have $T_n\in \cal T_{\Delta}^{\spadesuit}$. This completes the proof.
\end{proof}

\subsection*{Proof of
Theorem \ref{densec}} It suffices to consider the set $\tilde \La_\Delta$ defined in 
\eqref{tilde} by Corollary \ref{densetori} and Corollary \ref{cor_dense}.

\section{Limits of circular slices and Koebe-Maskit theorem}
Let $\Ga<\PSL_2(\bc)$ be a non-elementary Kleinian group and $\Omega=\S^2-\La$ its ordinary set, i.e., $\La\subset \S^2$ denotes the limit set of $\Ga$. We refer to \cite{Marden2016hyperbolic} and \cite{Matsuzaki1998hyperbolic}  for general facts on the theory of Kleinian groups.

\begin{Def} \begin{enumerate}
    \item We call a circle $C$ doubly stable for $\La$ if 
for any sequence of circles $C_i$ converging to $C$, $\#\limsup (C_i\cap \La)\ge 2$.
\item We call $\La$ doubly stable if for any $\xi\in \La$,
there exists a circle $C\ni \xi$, which is doubly stable for $\La$. \end{enumerate}
\end{Def}

The main goal of this section is to prove the following lemma:
\begin{lem} \label{stable} If $\Ga$ is finitely generated and $\Omega$ is not connected, then $\La$ is doubly stable.
\end{lem}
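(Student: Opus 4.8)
The plan is to prove a slightly sharper statement: for every $\xi\in\La$, the unique circle $C$ through $\xi$ meeting the interiors of two distinct components of $\Omega$ is doubly stable for $\La$. Since $\Omega$ is disconnected, I would first fix two distinct components $U\ne V$; being connected components of the open set $\Omega\subset\S^2$, both $U$ and $V$ are open. Given $\xi\in\La$, choose any $p\in U$ and $q\in V$. The three points $\xi,p,q$ are distinct, since $\xi\in\La$ while $p,q\in\Omega$ and $U\cap V=\emptyset$, so they determine a unique circle $C$, which passes through $\xi$ and meets the interiors of $U$ and $V$. It then remains to verify that such a $C$ is doubly stable, and since $\xi$ is arbitrary this gives that $\La$ is doubly stable.

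The core topological observation is that any connected arc with one endpoint in $U$ and the other in $V$ must meet $\La$; otherwise it would be a connected subset of $\Omega$ joining two distinct components, which is impossible. I would apply this both to $C$ and to nearby circles. Let $C_i\to C$ be an arbitrary sequence of circles. Choosing $p_i,q_i\in C_i$ with $p_i\to p$ and $q_i\to q$, the openness of $U$ and $V$ forces $p_i\in U$ and $q_i\in V$ for all large $i$. For each such $i$, the two arcs of $C_i$ cut out by $p_i$ and $q_i$ each join $U$ to $V$, hence each contains a point of $\La$; call these $x_i$ and $y_i$, so that on $C_i$ the cyclic order is $p_i,x_i,q_i,y_i$.

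Passing to a subsequence, $x_i\to x'$ and $y_i\to y'$, where $x',y'\in C\cap\La$ by compactness of $\La$ together with $C_i\to C$. The key point is that the separation of $x_i,y_i$ by $\{p_i,q_i\}$ survives in the limit: $x'$ and $y'$ lie in the two closed arcs of $C$ determined by $p$ and $q$, whose intersection is exactly $\{p,q\}$. Since $x',y'\in\La$ while $p,q\in\Omega$, neither $x'$ nor $y'$ can equal $p$ or $q$, so $x'$ and $y'$ lie in the two disjoint \emph{open} arcs; in particular $x'\ne y'$. Therefore $\{x',y'\}\subset\limsup(C_i\cap\La)$ and $\#\limsup(C_i\cap\La)\ge 2$, which is precisely double stability of $C$.

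The step I expect to demand the most care is this passage to the limit of the separation property---guaranteeing that the two crossing points do not collide---which rests on a clean Hausdorff convergence of the two arcs of $C_i$ to those of $C$, a mild labeling issue handled by a further subsequence. The disconnectedness of $\Omega$ is used essentially and in exactly one place, to supply the two components $U,V$. Where more than bare topology could enter is in ensuring the components one works with are sufficiently regular; here the finite generation of $\Gamma$ together with the Koebe--Maskit theorem (Theorem~\ref{koebe}), which constrains the structure of the components of $\Omega$, can be invoked to reinforce the elementary argument should the ambient configuration of $\La$ be pathological.
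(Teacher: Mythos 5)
Your proof is correct, and it takes a genuinely different route from the paper's. The paper deduces Lemma \ref{stable} from a stronger statement (Lemma \ref{limsuplim}): \emph{every} circle $C$ not contained in $\overline{\Omega_0}$ for any single component $\Omega_0$ of $\Omega$ is doubly stable; the lemma then follows by taking the circle through $\xi$ and two points in distinct components. The proof of that stronger lemma is exactly where finite generation enters: if the conclusion failed, each nearby circle $C_n$ would, outside a shrinking arc $I_n$, be a connected subset of $\Omega$, hence trapped in a single component of definite diameter, and the Koebe--Maskit theorem (Theorem \ref{koebe}, via Corollary \ref{diam}: only finitely many components have diameter $>\e$) then forces $C\subset\overline{\Omega_j}$ for a fixed $j$, a contradiction. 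You replace all of this by a purely topological separation argument: each of the two arcs of $C_i$ between $p_i\in U$ and $q_i\in V$ must cross $\La$, and the two crossing points persist into the two closed limit arcs of $C$, which meet only at $p,q\in\Omega$, so the crossing points (being in $\La$) cannot collide. The one step you flag as delicate---persistence of the separation---is indeed the only point needing care; besides the Hausdorff convergence of arcs, it can also be settled at once with cross-ratios: the quadruples $(p_i,q_i;x_i,y_i)$ are interleaved on $C_i$, hence have negative real cross-ratio, whereas a collision $x'=y'\notin\{p,q\}$ would make the limit cross-ratio equal to $1$. As for what each approach buys: the paper's lemma certifies double stability for a wider class of circles, but your argument is more elementary and, notably, never uses finite generation of $\Ga$---disconnectedness of $\Omega$ alone suffices. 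Since the paper itself remarks that finite generation and disconnectedness were used \emph{only} to ensure double stability, your argument shows the finite generation hypothesis could in fact be removed from Lemma \ref{stable}, and hence from Theorem \ref{m1}. Two cosmetic points: your opening phrase ``the unique circle through $\xi$ meeting the interiors of two distinct components'' should read ``the circle through $\xi$, $p$, $q$'' (uniqueness is of the circle through three chosen points, not of a circle meeting two components), and your closing hedge about invoking Koebe--Maskit in ``pathological'' situations is unnecessary---your argument is complete without it.
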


In the rest of this section, we assume $\Ga$ is finitely generated.
Lemma \ref{stable} is an immediate consequence of the following lemma, since, if $\xi_1, \xi_2\in \Omega$ belong to different components of $\Omega$,
then for any $\xi\in \La$, the circle $C$ passing through $\xi, \xi_1,\xi_2$  is not contained in the closure of any component of $\Omega$.

\begin{lemma}\label{limsuplim} Let $C\subset \S^2$ be a circle such that $C \not\subset \overline{\Omega_0}$ for any component $\Omega_0$ of $\Omega$.
  If $C_n$ is a sequence of circles converging to $C$,
  then\footnote{
For a sequence of subsets $S_n$ in a topological space, we define
$\limsup S_n=\bigcap_{n} \overline{\bigcup_{i\ge n}  S_{i}} .$     } $$ \# \limsup (C_n \cap \Lambda)  \ge 2.$$
\end{lemma}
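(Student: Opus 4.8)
The plan is to prove Lemma \ref{limsuplim} by analyzing what happens when a circle $C$ meets the limit set $\La$ but is not contained in the closure of a single component of $\Omega$. The intuition is geometric: since $C$ is not contained in $\overline{\Omega_0}$ for any component $\Omega_0$, the circle $C$ must either meet $\La$ in infinitely many points, or it crosses between different components of $\Omega$, and in the latter case any crossing forces $C$ to pass through $\La$ while entering a new region. I would first reduce to the dichotomy: either $C \cap \La$ is infinite, or $C \cap \La$ is finite. In the infinite case, I expect the conclusion to follow relatively easily, since $C_n \to C$ should force $\limsup(C_n \cap \La)$ to capture at least two (in fact infinitely many) limit points of the rich intersection $C \cap \La$; the main tool here is that $\La$ is closed and $\Ga$-invariant, together with an approximation/upper-semicontinuity argument for how $C_n \cap \La$ accumulates onto $C \cap \La$.

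The more delicate case is when $C \cap \La$ is finite. Here I would argue that the hypothesis $C \not\subset \overline{\Omega_0}$ forces $C$ to pass through at least two distinct points of $\La$ that ``separate'' arcs of $C$ lying in genuinely different components of $\Omega$. Concretely, parametrize $C$ as a circle and look at the arcs of $C \cap \Omega$; since $C$ is not contained in the closure of one component, there must be two points $\xi, \xi' \in C \cap \La$ that are boundary endpoints of arcs lying in distinct components $\Omega_1 \ne \Omega_2$ of $\Omega$ (or arcs separated by limit points). The key structural input is that $\Ga$ is finitely generated, so by the work underlying the tameness/Ahlfors results the components of $\Omega$ are well-behaved (each component is simply connected or has a nice boundary), and a small perturbation $C_n$ of $C$ that still crosses from the region near $\Omega_1$ to the region near $\Omega_2$ must intersect $\La$ near both $\xi$ and $\xi'$. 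This would give two distinct accumulation points in $\limsup(C_n \cap \La)$, one near each separating point.

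The main obstacle I anticipate is making rigorous the claim that a circle $C_n$ close to $C$ and crossing between the two components is forced to meet $\La$ \emph{near each of two distinct separating points} rather than, say, meeting $\La$ in a cluster near a single point. To handle this I would use the topology of $\S^2 = \Omega \sqcup \La$: a circle $C_n$ that has arcs intersecting both $\Omega_1$ and $\Omega_2$ cannot stay inside $\Omega$ on the transition, so on each side of the transition it must hit $\partial \Omega \subset \La$; the two transitions happen near two distinct points of $C$ because $C$ itself realizes two distinct crossings (entering and leaving the neighborhood of a given component). I would formalize ``separating points'' by choosing $\xi_1 \in \Omega_1$ and $\xi_2 \in \Omega_2$ in distinct components and noting, as in the remark preceding the lemma, that the circle through a given $\xi \in \La$ and these two points cannot lie in one $\overline{\Omega_0}$; then an elementary continuity argument shows the perturbed circles $C_n$ accumulate on $\La$ at two separated locations.

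The fallback, should the direct topological argument prove fragile in the finite-intersection case, is to argue by contradiction: if $\#\limsup(C_n \cap \La) \le 1$, then for large $n$ the circles $C_n$ meet $\La$ only in a shrinking neighborhood of a single point $\xi_0 \in C$, so that $C_n$ minus a small arc around $\xi_0$ lies entirely in $\Omega$; passing to the limit, $C$ minus $\{\xi_0\}$ would lie in $\overline{\Omega}$ and in fact, by connectedness of $C \setminus \{\xi_0\}$, inside the closure of a single component $\Omega_0$, contradicting the hypothesis $C \not\subset \overline{\Omega_0}$. I expect this contradiction argument to be the cleanest route, with the only technical care needed being the upper semicontinuity of $(C_n \cap \La)$ under $C_n \to C$ and the fact that a connected subset of $\Omega$ lies in a single component.
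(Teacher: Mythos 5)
Your proposal has a genuine gap, and it sits exactly where you placed your confidence: in the fallback contradiction argument. From $\#\limsup(C_n\cap\La)\le 1$ you correctly deduce that, for large $n$, the circle $C_n$ minus a small arc around $\xi_0$ is a connected subset of $\Omega$, hence lies in a single component $\Omega_n$ of $\Omega$, and that in the limit $C-\{\xi_0\}\subset\overline{\Omega}$. But the final step --- ``by connectedness of $C\setminus\{\xi_0\}$, it lies inside the closure of a single component'' --- is false. Connectedness localizes a set inside $\Omega$ itself, not inside $\overline{\Omega}$: closures of distinct components meet along $\La$ and can chain together, so a connected subset of $\overline{\Omega}$ need not lie in any single $\overline{\Omega_0}$. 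Worse, in the relevant setting $\La$ has measure zero (Ahlfors), hence empty interior, so $\overline{\Omega}=\S^2$ and the inclusion $C-\{\xi_0\}\subset\overline{\Omega}$ carries no information whatsoever. The substantive issue your argument must rule out is that the component $\Omega_n$ may vary with $n$: a priori $C$ could be approximated by circles each lying (up to a tiny arc) in a \emph{different} large component, with $C$ in the closure of none of them. Excluding this is precisely the role of the paper's key input, the Koebe--Maskit theorem (Theorem \ref{koebe}, via Corollary \ref{diam}): for each $\e>0$ only finitely many components of $\Omega$ have diameter $>\e$. Since each $\Omega_n$ contains $C_n$ minus a small arc, $\diam(\Omega_n)\ge \diam(C_n)-2\e$ is bounded below, so the $\Omega_n$ range over a finite list independent of $n$ and $\e$; pigeonholing and letting $\e\to 0$ pins down one component $\Omega_j$ with $C\subset\overline{\Omega_j}$, the desired contradiction. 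Your proposal never invokes this (or any substitute) finiteness statement, and without it the argument does not close.

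Your primary route has problems as well. The finite/infinite dichotomy on $C\cap\La$ is not the relevant one, and the infinite case is not ``relatively easy'': semicontinuity goes the wrong way. One always has $\limsup(C_n\cap\La)\subset C\cap\La$, but points of $C\cap\La$ need not be limits of points of $C_n\cap\La$ --- the perturbed circles can simply miss $\La$ --- so richness of $C\cap\La$ by itself gives nothing; this failure of lower semicontinuity is the whole point of the lemma and of the ``doubly stable'' notion. The crossing argument in your second paragraph is essentially sound \emph{when $C$ meets two distinct components} $\Omega_1,\Omega_2$: choosing $x\in C\cap\Omega_1$, $y\in C\cap\Omega_2$ and $x_n,y_n\in C_n$ with $x_n\to x$, $y_n\to y$, each of the two arcs of $C_n$ joining $x_n$ to $y_n$ must contain a point of $\La$ (else it would be a connected subset of $\Omega$ meeting two components), and since the two limiting arcs of $C$ meet only at $x,y\notin\La$, the accumulation points produced on the two arcs are distinct; this is a clean elementary argument that avoids Koebe--Maskit entirely. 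But the hypothesis $C\not\subset\overline{\Omega_0}$ does not imply that $C$ meets two components: $C$ may meet only one component, or none, and instead contain an arc lying in $\La$ itself (with the failure of $C\subset\overline{\Omega_0}$ witnessed by limit points far from that component). In those configurations your crossing argument has nothing to grab onto, and you are thrown back on the fallback --- and on the missing finiteness input described above.
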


The main ingredient is the following formulation of 
the Koebe-Maskit theorem 
(\cite[Theorem 6]{Maskit1974Intersection}, \cite[Theorem 1]{Sasaki1981KoebeMaskit}):
\begin{theorem} \label{koebe}
Let $\{\Omega_i\}$ be the collection of all components of the ordinary set $\Omega$.
Then for any $\alpha>2$,
$\sum_{i}\diam ( \Omega_i)^\alpha <\infty $
where $\diam(\Omega_i)$ is the diameter of $\Omega_i$ in the spherical metric on $\S^2$.
\end{theorem}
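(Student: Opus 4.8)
The plan is to deduce the summability from two classical facts: Ahlfors' finiteness theorem, which reduces the problem to a single $\Ga$-orbit of components, and the universal bound $\delta_\Ga\le 2$ on the critical exponent of a discrete subgroup of $\PSL_2(\bc)$, which turns the resulting series into a convergent Poincaré series precisely in the range $\alpha>2$. First I would reduce to one orbit. Since $\Ga$ is finitely generated, Ahlfors' finiteness theorem shows that $\Om/\Ga$ is a finite union of Riemann surfaces of finite type; in particular $\Ga$ permutes the components $\{\Om_i\}$ with only finitely many orbits. Fixing an orbit representative $\Om_0$ with stabilizer $\Ga_0=\Stab_\Ga(\Om_0)$, it suffices to prove
$$\sum_{\ga\in\Ga/\Ga_0}\diam(\ga\Om_0)^\alpha<\infty\qquad(\alpha>2),$$
and then add up the finitely many orbit contributions. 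After conjugating we may assume that $\infty$ lies in the interior of $\Om_0$ (recall $\Om\ne\emptyset$, as there are components to sum).

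Second I would convert spherical diameters into hyperbolic displacements. Fix a base point $o\in\bH^3$. For $\ga\notin\Ga_0$, the center $\ga^{-1}(\infty)$ of the isometric circle of $\ga$ lies in the component $\ga^{-1}\Om_0$, which is disjoint from $\Om_0$; writing $\ga'(z)=c_\ga^{-2}(z-\ga^{-1}\infty)^{-2}$ one sees that $\ga$ contracts $\Om_0$ by a factor $\sup_{\xi\in\bar\Om_0}|\ga'(\xi)|_\sigma$ in the spherical metric, so that $\diam(\ga\Om_0)\lesssim\sup_{\xi\in\bar\Om_0}|\ga'(\xi)|_\sigma$ whenever $\bar\Om_0$ has bounded spherical distortion under $\ga$. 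The standard Busemann/shadow comparison then gives $\sup_{\xi\in\bar\Om_0}|\ga'(\xi)|_\sigma\asymp e^{-d(o,\ga o)}$ for $o$ chosen near the convex hull of $\Om_0$, hence $\diam(\ga\Om_0)\lesssim e^{-d(o,\ga o)}$.

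Finally I would sum: $\sum_{\ga}e^{-\alpha d(o,\ga o)}$ is the Poincaré series of $\Ga$ at exponent $\alpha$, which converges for every $\alpha>\delta_\Ga$. Since any discrete subgroup of $\PSL_2(\bc)=\Isom^+(\bH^3)$ satisfies $\delta_\Ga\le 2$ by the volume-entropy bound, the hypothesis $\alpha>2$ guarantees $\alpha>\delta_\Ga$, so the series converges; combined with the comparison above this yields the bound for one orbit, and summing over the finitely many orbits proves the theorem. Note that this is also exactly where the exponent $2$ enters, as the dimension of the boundary $\S^2=\partial\bH^3$.

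The main obstacle is the uniform comparison $\diam(\ga\Om_0)\lesssim e^{-d(o,\ga o)}$ of the second step, which is genuinely delicate for the cosets where $\bar\Om_0$ meets the expanding region of $\ga$, i.e.\ where $\ga^{-1}(\infty)$ is close to $\partial\Om_0\subset\La$: there $\ga$ may enlarge rather than contract $\Om_0$, and no uniform lower bound on $\mathrm{dist}(\ga^{-1}\infty,\Om_0)$ is available, since distinct components abut along the limit set. Ruling out that infinitely many images $\ga\Om_0$ are large is where the real content lies; I would handle it using the disjointness of the images $\ga\Om_0$ together with uniform perfectness of the non-elementary limit set $\La$, which prevents the components from being arbitrarily thin and forces the large images to be sparse at each scale. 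This last point is essentially equivalent to the counting estimate $\#\{i:\diam(\Om_i)\ge r\}\lesssim r^{-2}$, from which the theorem follows directly via the layer-cake identity $\sum_i\diam(\Om_i)^\alpha=\alpha\int_0^\infty r^{\alpha-1}\,\#\{i:\diam(\Om_i)\ge r\}\,dr<\infty$ for $\alpha>2$.
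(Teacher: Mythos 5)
First, a point of comparison: the paper does not prove this statement at all --- Theorem \ref{koebe} is imported verbatim from the literature (Maskit \cite[Theorem 6]{Maskit1974Intersection}, Sasaki \cite[Theorem 1]{Sasaki1981KoebeMaskit}) and is used in the paper only through Corollary \ref{diam}. So your attempt must stand on its own, and its skeleton is indeed the classical line of attack: Ahlfors' finiteness theorem does reduce to finitely many $\Ga$-orbits of components; a sum over coset representatives of $\Ga/\Ga_0$ is dominated by the full Poincar\'e series; $\delta_\Ga\le 2$ holds by volume growth in $\bH^3$; and your layer-cake reduction of the theorem to the counting bound $\#\{i:\diam(\Om_i)\ge r\}\lesssim r^{-2}$ is a correct implication. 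A piece of your second step also works cleanly: fixing a disk $B\subset\Om_0$ with $\op{dist}(B,\partial\Om_0)>0$ and $\infty\in\Om_0$, the pole $\ga^{-1}(\infty)$ lies in $\ga^{-1}\Om_0$, hence uniformly far from $B$, so the images $\ga B$ over distinct cosets are disjoint round disks of radius $\gtrsim |c_\ga|^{-2}$, and disjointness gives $\sum_{\ga\in\Ga/\Ga_0}|c_\ga|^{-4}<\infty$.

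The genuine gap is exactly the point you flag and then wave at: converting $|c_\ga|^{-2}\asymp e^{-d(o,\ga o)}$ into a bound on $\diam(\ga\Om_0)$. From $|\ga(z)-\ga(\infty)|=|c_\ga|^{-2}\,|z-\ga^{-1}(\infty)|^{-1}$ one only gets $\diam(\ga\Om_0)\lesssim |c_\ga|^{-2}/\op{dist}(\ga^{-1}(\infty),\Om_0)$, and since poles accumulate on $\La\supset\partial\Om_0$, no uniform lower bound on that distance exists; these bad cosets are unavoidable, not exceptional. Your proposed repair does not close this. The counting estimate $\#\{i:\diam(\Om_i)\ge r\}\lesssim r^{-2}$ is itself a strengthening of the theorem, so invoking it without proof begs the question; and neither of the two tools you name yields it. Disjointness converts counts into area bounds, which requires each component of diameter $\ge r$ to have spherical area $\gtrsim r^2$ --- but the problematic images $\ga\Om_0$ are precisely thin slivers violating this: they contain a round disk only of radius $\asymp|c_\ga|^{-2}$ while having diameter $\asymp |c_\ga|^{-2}/\op{dist}(\ga^{-1}(\infty),\Om_0)$, which can be larger by an arbitrary factor. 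Uniform perfectness of $\La$ constrains the moduli of annuli separating the limit set; it says nothing about the eccentricity of components, which need not be quasidisks (they can be infinitely connected, so no bounded-turning or inscribed-disk estimate is available). What actually controls the thin images in the proofs of Maskit and Sasaki is the finite-type geometry of the quotient surfaces $\Om_i/\Stab_\Ga(\Om_i)$ --- the full strength of Ahlfors finiteness, with only finitely many cusps and short curves to account for thin parts --- and this ingredient is entirely absent from your sketch. As written, the proposal is an honest outline whose central estimate is acknowledged but not established.
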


We will only need the following
immediate corollary of Theorem \ref{koebe}:
\begin{cor} \label{diam} 
 For any $\e > 0$, there are only finitely many components of the ordinary set of $\Ga$ with diameter bigger than $\e$.
\end{cor}

\subsection*{Proof of Lemma \ref{limsuplim}} Given Corollary \ref{diam},
the proof is similar to the proof of \cite[Lemma 8.1]{lee2019orbit}, which deals with the case when all components of $\Omega$ are round disks.

Let $C$ and $C_n \to C$ be as in the statement of the lemma.
It suffices to show that there exists $\e_0 > 0$ such that
$C_{n_i} \cap \Lambda$ contains two points of  distance at least $\e_0$ for some infinite sequence $n_i\to \infty$.
  Suppose not. Then, letting $I_n$ be the minimal connected subset of $C_n$ containing $C_n\cap \La$,
 we have $\diam (I_n)\to 0$ as $n\to \infty$. 

  Setting $\eta= \diam(C)/2$, we have $\diam(C_n) > \eta$ for all sufficiently large $n$. Let $0<\e<\eta/4$ be arbitrary.
  Since $\diam(I_n) \to 0$,  we have $\diam(I_n) < \e$ for all large $n$. Noting that $C_n - I_n$ is a connected subset of $\Omega$, 
  let $\Omega_n$ be the connected component of $\Omega$ containing
  $C_n - I_n$. Then $C_n$ is contained in the $\e$-neighborhood of $\Omega_n$, which implies $$\diam(\Omega_n) \ge \diam (C_n) - 2 \e  > \eta/2.$$
  By Corollary \ref{diam}, the collection $\{\Omega_n: \diam(\Omega_n) > \eta/2\}$ must be a finite set, say, $\{\Omega_1, \cdots, \Omega_N\}$.
  Therefore, for some $1\le j\le N$, there exists an
  infinite sequence $C_{n_i}$  contained in the $\e$-neighborhood of $\Omega_j$.
  Hence $C$ is contained in the $2\e$-neighborhood of $ \Omega_j$. Since the collection $\{\Omega_1, \cdots, \Omega_N\}$ does not depend on $\e$,
  we can find a sequence $\e_k\to 0$ and a fixed $1\le j\le N$ such that
   $C$ is contained in the $2\e_k$-neighborhood of $\Omega_j$.
  It follows  that $C \subset \overline{\Omega_j}$, contradicting the hypothesis on $C$. This finishes the proof.

\section{Self-joinings of Kleinian groups and Proof of Theorem \ref{m1}.}
Let $\Ga<\PSL_2(\bc)$ be a Zariski dense discrete subgroup with limit set $\La$. As before, we denote by $\Omega=\S^2-\La$ its ordinary set.

We fix a discrete faithful representation $\rho:\Ga\to \PSL_2(\bc)$ such that $\rho(\Ga)$ is Zariski dense.

We now define the self-joining of $\Ga$ via $\rho$ as  
$$\Ga_\rho:=(\id\times \rho)(\Ga) = \{ (\ga, \rho(\ga)) : \ga \in \Ga \},$$
which is a discrete subgroup of $G$.

We begin by recalling two standard facts:
\begin{lem} \label{Zdense}
The subgroup $\Ga_\rho$ is Zariski dense in $G$ if and only if $\rho$ is not a conjugation  by an element of $\op{M\ddot{o}b}(\S^2)$.
\end{lem}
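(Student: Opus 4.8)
The plan is to prove both directions by exploiting the structure of the two projection maps $\pi_1, \pi_2 : G \to \PSL_2(\c)$ together with the fact that $\Ga_\rho$ is the graph of the isomorphism $\rho : \Ga \to \rho(\Ga)$. The easy direction is that a conjugation forces failure of Zariski density. Suppose $\rho(\ga) = g\ga g^{-1}$ for some $g \in \Mob(\S^2)$. If $g \in \PSL_2(\c)$, then $\Ga_\rho = \{(\ga, g\ga g^{-1}) : \ga \in \Ga\}$ is contained in the proper algebraic subvariety $\{(x, gxg^{-1}) : x \in \PSL_2(\c)\}$, which is the image of $\PSL_2(\c)$ under the regular embedding $x \mapsto (x, gxg^{-1})$; since $\dim \PSL_2(\c) = 3 < 6 = \dim G$, this subgroup is not Zariski dense. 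If $g \in \Mob(\S^2) \setminus \PSL_2(\c)$, then conjugation by $g$ is an anti-holomorphic (conjugate-linear) automorphism; writing $g = g_0 \sigma$ where $\sigma$ denotes complex conjugation $z \mapsto \bar z$ and $g_0 \in \PSL_2(\c)$, one checks that $\ga \mapsto g\ga g^{-1}$ is the composition of the standard complex-conjugation automorphism of $\PSL_2(\c)$ with an inner automorphism, and the graph again lies in a $3$-real-dimensional algebraic subgroup of the $6$-real-dimensional $G$, hence is not Zariski dense. The key point is that in all cases the graph is contained in the graph of an \emph{algebraic} automorphism of $\PSL_2(\c)$, which is a proper subvariety of $G$.

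For the converse, I would argue contrapositively: assume $\Ga_\rho$ is not Zariski dense in $G$ and deduce that $\rho$ is a conjugation. Let $H = \overline{\Ga_\rho}^{\,Z}$ be the Zariski closure, a proper algebraic subgroup of $G$. The two projections $\pi_i(H)$ are algebraic subgroups of $\PSL_2(\c)$ containing $\pi_i(\Ga_\rho)$; since $\Ga$ and $\rho(\Ga)$ are each Zariski dense in $\PSL_2(\c)$ by hypothesis, both projections $\pi_i : H \to \PSL_2(\c)$ are surjective (dominant with Zariski-dense, hence all of, image). Thus $H$ is a proper algebraic subgroup of $\PSL_2(\c)\times\PSL_2(\c)$ surjecting onto both factors. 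This is exactly the setup of Goursat's lemma in the algebraic category: such an $H$ arises from an isomorphism between the quotients $\PSL_2(\c)/\pi_1(N)$ and $\PSL_2(\c)/\pi_2(N)$, where $N = H \cap (\PSL_2(\c)\times 1)$ and symmetrically. Because $\PSL_2(\c)$ is (almost) simple as an algebraic group, its only proper normal algebraic subgroups are finite/central; since $\PSL_2(\c)$ is already centerless, these are trivial. Hence $H$ is the graph of an algebraic isomorphism $\phi : \PSL_2(\c) \to \PSL_2(\c)$, and $\rho(\ga) = \phi(\ga)$ for all $\ga \in \Ga$.

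It then remains to identify the algebraic automorphisms of $\PSL_2(\c)$, viewed as a real algebraic group. The automorphism group of $\PSL_2(\c)$ as a \emph{real} algebraic group is generated by inner automorphisms together with complex conjugation (this is where the distinction from the complex algebraic picture matters: over $\c$, $\Aut(\PSL_2(\c)) = \Inn$, but as a real form one also gets the Galois automorphism). Every such $\phi$ is therefore of the form $x \mapsto g_0 \tau(x) g_0^{-1}$ where $\tau$ is either the identity or complex conjugation and $g_0 \in \PSL_2(\c)$; in both cases $\phi$ is realized as conjugation by an element of $\Mob(\S^2)$, either in $\PSL_2(\c)$ or in its nontrivial coset. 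Since $\rho = \phi|_\Ga$, this shows $\rho$ is a conjugation by an element of $\Mob(\S^2)$, completing the converse.

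I expect the main obstacle to be the careful execution of the algebraic Goursat step and the classification of real-algebraic automorphisms. One must be attentive that all subgroups and homomorphisms in sight are genuinely algebraic (not merely abstract), that $\PSL_2(\c)$ has no proper positive-dimensional normal algebraic subgroups as a real group, and crucially that the relevant automorphism group includes complex conjugation, which is precisely what produces the orientation-reversing M\"obius transformations in $\Mob(\S^2)\setminus\PSL_2(\c)$. The cleanest route is probably to cite a standard reference for these facts about $\Aut$ of semisimple real algebraic groups rather than reprove them, treating the identification $\Ga_\rho$ not Zariski dense $\iff$ $\rho$ conjugation as a known structural consequence, which is consistent with the paper's own description of Lemma \ref{Zdense} as a \emph{standard fact}.
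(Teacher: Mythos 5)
Your proposal is correct and takes essentially the same route as the paper: the paper's own argument is exactly your Goursat step carried out by hand (the intersections of the Zariski closure with $\ker\pi_1$ and $\ker\pi_2$ are normal, hence trivial by simplicity of $\PSL_2(\c)$, so the closure is the graph of a Lie group automorphism of $\PSL_2(\c)$, which is then identified as conjugation by an element of $\Mob(\S^2)$ by citing G\"undo\u{g}an, precisely the ``standard reference'' you propose to invoke). The only blemish is a harmless dimension slip in the easy direction: the graph of an automorphism has real dimension $6$ inside the real $12$-dimensional $G$ (equivalently, complex dimension $3$ inside $6$ in the holomorphic case), but the conclusion that it is a proper Zariski-closed subgroup is unaffected.
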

\begin{proof}

It is clear that if $\rho$ is a conjugation by an element of $\Mob(\S^2)$, then $\Gr$ is not Zariski dense in $G$. To see the converse, let $G_0<G$ be the Zariski closure of $\Gr$ and suppose that $G_0 \neq G$. Denote by $\pi_i : G = \PSL_2(\bc) \times \PSL_2(\bc) \to \PSL_2(\bc)$ the projection onto the $i$-th component.

We now claim that $\pi_1|_{G_0}$ is an isomorphism. Since $\Gamma$ is Zariski dense, $\pi_1|_{G_0}$ is surjective. Hence, it suffices to show that $\pi_1|_{G_0}$ is injective. Note that $G_0 \cap \ker \pi_1 = G_0 \cap ( \{e \} \times \PSL_2(\bc))$ is a normal subgroup of $G_0$. Hence, $G_0 \cap \ker \pi_1$ is normalized by  $\{e\} \times \PSL_2(\bc)$ since $\rho(\Gamma)$ is Zariski dense $\PSL_2(\bc)$. Thus, $G_0 \cap \ker \pi_1$ is a normal subgroup of $\ker \pi_1$. As $\ker \pi_1 \cong \PSL_2(\bc)$ is simple, $G_0 \cap \ker \pi_1$ is either trivial or $\{e \} \times \PSL_2(\bc)$. In the latter case, note that $\{e \} \times \PSL_2(\bc)  < G_0$.  
Since $\pi_1|_{G_0}$ is surjective,
it follows that $G_0 = G$, yielding contradiction. Therefore $\pi_1|_{G_0}$ is injective, and hence an isomorphism.
Similarly, $\pi_2|_{G_0}$ is an isomorphism. Hence,  $\pi_2|_{G_0} \circ \pi_1|_{G_0}^{-1}$ is a Lie group automorphism of $\PSL_2(\bc)$. 
Hence it is a conjugation by a M\"obius transformation (cf. \cite{Gundogan2010component}). Since this map restricts to $\rho$ on $\Gamma$, it finishes the proof.
\end{proof}

Since $\rho$ gives an isomorphism from $\Ga$ to $\rho(\Ga)$
and $f$ is an equivariant embedding, it follows that
$\rho$ maps every loxodromic element $\ga$ to a loxodromic element $\rho(\ga)$
and $f$ sends the attracting fixed point of  $\ga\in \Ga$
to the attracting fixed point of $\rho(\gamma)$. Since the set of attracting fixed points of loxodromic elements of $\Ga$ is dense in $\La$, this implies the following.
\begin{lem}\label{boundary}
 There can be at most one $\rho$-boundary map $ f :~\La \to \S^2$.
In particular, if $\rho$ is a conjugation by $g \in \op{M\ddot{o}b}(\S^2)$, then $f = g|_{\La}$.
\end{lem}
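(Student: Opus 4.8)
The plan is to exploit the north--south dynamics of loxodromic elements together with the equivariance of $f$ in order to pin down $f$ on a dense subset of $\La$, and then invoke continuity.

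First I would establish the dynamical fact announced just before the statement: for every loxodromic $\ga\in\Ga$, the image $\rho(\ga)$ is again loxodromic and $f(y_\ga)=y_{\rho(\ga)}$, where $y_\ga,y_{\rho(\ga)}$ denote attracting fixed points. Fix a loxodromic $\ga$ with attracting and repelling fixed points $y_\ga,y_{\ga^{-1}}\in\La$, and pick any $\xi\in\La\setminus\{y_{\ga^{-1}}\}$. The north--south dynamics of $\ga$ give $\ga^n\xi\to y_\ga$, so continuity and equivariance yield $\rho(\ga)^n f(\xi)=f(\ga^n\xi)\to f(y_\ga)$; applying the same argument to $\ga^{-1}$ gives a backward limit $\rho(\ga)^{-n}f(\xi')\to f(y_{\ga^{-1}})$ for $\xi'\in\La\setminus\{y_\ga\}$. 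Thus $\rho(\ga)$ has a forward limit point $f(y_\ga)$ and a backward limit point $f(y_{\ga^{-1}})$, and these are distinct since $f$ is injective and $y_\ga\neq y_{\ga^{-1}}$. A non-identity element of $\PSL_2(\c)$ possessing distinct forward- and backward-attracting fixed points is loxodromic, so $\rho(\ga)$ is loxodromic with $y_{\rho(\ga)}=f(y_\ga)$.

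Next I would invoke the standard fact that for a non-elementary Kleinian group the attracting fixed points of loxodromic elements are dense in $\La$. Combined with the previous step, this shows that any $\rho$-boundary map must satisfy $f(y_\ga)=y_{\rho(\ga)}$ on the dense set $\{y_\ga:\ga\in\Ga\text{ loxodromic}\}$; but the right-hand side depends only on $\rho$, not on the particular choice of $f$. Since $f$ is continuous and $\La$ is the closure of this set, $f$ is uniquely determined on all of $\La$, proving there is at most one $\rho$-boundary map. For the ``in particular'' statement, suppose $\rho$ is the conjugation by $g\in\Mob(\S^2)$, i.e. $\rho(\ga)=g\ga g^{-1}$ for all $\ga\in\Ga$. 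Then $g|_\La$ is a continuous embedding, as $g$ is a homeomorphism of $\S^2$, and it is $\rho$-equivariant because $g(\ga\xi)=(g\ga g^{-1})(g\xi)=\rho(\ga)(g\xi)$; hence $g|_\La$ is a $\rho$-boundary map, and by the uniqueness just proved $f=g|_\La$.

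I expect the only delicate point to be ruling out the parabolic case in the first step, since a parabolic element of $\PSL_2(\c)$ also produces a single forward limit point and could superficially masquerade as the attracting behaviour we observe. This is precisely why one runs the dynamical argument for both $\ga$ and $\ga^{-1}$ and uses the injectivity of $f$ to separate the forward and backward limits: a parabolic element has coincident forward and backward limit points, so the strict inequality $f(y_\ga)\neq f(y_{\ga^{-1}})$ forces loxodromicity. Everything else is a routine density-and-continuity argument.
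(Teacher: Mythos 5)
Your overall strategy --- show that $\rho$ sends loxodromic elements to loxodromic elements with $f(y_\ga)=y_{\rho(\ga)}$, use density of attracting fixed points in $\La$ plus continuity to get uniqueness, and then observe that $g|_\La$ is itself a $\rho$-boundary map when $\rho$ is the conjugation by $g$ --- is exactly the argument of the paper, which asserts these same facts (without detail) in the paragraph preceding the lemma. The density step, the continuity step, and the ``in particular'' step of your write-up are all correct.

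There is, however, one genuine gap in your classification step. The trichotomy for non-identity elements of $\PSL_2(\c)$ is elliptic / parabolic / loxodromic, and your auxiliary claim --- that a non-identity element with distinct forward- and backward-attracting fixed points is loxodromic --- is false in the weak sense in which you have verified its hypothesis. All you know is that two particular orbits converge: $\rho(\ga)^n f(\xi)\to f(y_\ga)$ and $\rho(\ga)^{-n}f(\xi')\to f(y_{\ga^{-1}})$. A non-trivial rotation has two distinct fixed points, and the constant orbits based at those fixed points trivially ``converge'' to two distinct limits, so distinct limit points of single orbits do not force loxodromicity. Your choices do not exclude this degenerate situation: you allow $\xi=y_\ga$ and $\xi'=y_{\ga^{-1}}$ (you only removed $y_{\ga^{-1}}$, resp.\ $y_\ga$), in which case both orbits are constant and carry no dynamical information; and your closing paragraph, which asserts that parabolicity is the only case to rule out, confirms that the elliptic case was overlooked. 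The repair is short. Take $\xi=\xi'\in\La\setminus\{y_\ga,y_{\ga^{-1}}\}$, which is possible since $\Ga$ is Zariski dense, hence non-elementary, so $\La$ is infinite; then $f(\xi)\notin\{f(y_\ga),f(y_{\ga^{-1}})\}$ by injectivity. If $\rho(\ga)$ were elliptic and non-trivial, any convergent orbit would have to be constant (non-fixed points of a non-trivial rotation have non-convergent orbits), forcing $f(\xi)=f(y_\ga)$ and contradicting injectivity; alternatively, one line suffices: $\rho(\ga)$ has infinite order because $\rho$ is faithful, and a discrete subgroup of $\PSL_2(\c)$ contains no infinite-order elliptic elements. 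Note that the same non-degenerate choice of $\xi$ is also needed at the end of your first step even in the loxodromic case: when $\xi=y_\ga$, the convergence only shows that $f(y_\ga)$ is \emph{some} fixed point of $\rho(\ga)$, not that it is the attracting one.
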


\subsection*{Proof of Theorem \ref{m1}}
By Lemma \ref{stable}, Theorem \ref{m1} follows from the following:
\begin{theorem}\label{m3} 
Let $\Ga<\PSL_2(\c)$ be a Zariski dense Kleinian group such that $\La$ is doubly stable.
Let $\rho\in \dGa$ be a Zariski dense representation with boundary map $f:\La\to \S^2$. 
 Unless $\rho$ is a conjugation, 
the subset
\be\label{cccc} \La_f:=\bigcup \{C\cap \La: f(C\cap \La)\text{ is contained in a circle}\}\ee
has empty interior in $\La$; hence
$$\{C\in \cal C_\La: f(C\cap \La)\text{ is contained in a circle}\}$$
has empty interior in $\cal C_\La$.
\end{theorem}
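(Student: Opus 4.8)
The plan is to argue by contradiction, converting the hypothesis ``$\La_f$ has nonempty interior in $\La$'' into a density statement about a distinguished class of tori, and then to contradict that density using double stability. Throughout, call a torus $T=(C,D)$ \emph{admissible} if $f(C\cap\La)\subset D$, and write $\La_\rho=\{(\xi,f(\xi)):\xi\in\La\}$ for the limit set of $\Gr$ in $\cal F=\S^2\times\S^2$. Two structural facts come first. By $\rho$-equivariance of $f$ together with $\ga\La=\La$, the collection of admissible tori is $\Gr$-invariant: if $f(C\cap\La)\subset D$ then $f(\ga C\cap\La)=\rho(\ga)f(C\cap\La)\subset\rho(\ga)D$. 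Moreover every admissible torus lies in $\cal T_{\Gr}$, since any $\xi\in C\cap\La$ gives $(\xi,f(\xi))\in T\cap\La_\rho$. Finally, by Lemma \ref{Zdense}, the assumption that $\rho$ is not a conjugation means exactly that $\Gr$ is Zariski dense in $G$.

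Assume then that $\rho$ is not a conjugation but that $\La_f$ has nonempty relative interior in $\La$. Let $\tilde\La_\rho$ be the dense subset of $\La_\rho$ furnished by Theorem \ref{densec} applied to $\Delta=\Gr$. Since $\La_\rho$ is homeomorphic to $\La$ via the first-coordinate projection, the projection of $\tilde\La_\rho$ is dense in $\La$, so it meets the (nonempty) relative interior of $\La_f$; choose $\xi_0$ in this intersection. As $\xi_0\in\La_f$, there is an admissible torus $T_0=(C_0,D_0)$ with $\xi_0\in C_0\cap\La$, and since $(\xi_0,f(\xi_0))\in T_0\cap\tilde\La_\rho$, Theorem \ref{densec} gives $\overline{\Gr T_0}=\cal T_{\Gr}$. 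Because $\Gr T_0$ consists of admissible tori, it follows that the admissible tori are \emph{dense} in $\cal T_{\Gr}$. The remainder of the proof is devoted to contradicting this density.

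The main mechanism is a limit lemma built from double stability: if $C$ is a doubly stable circle and $(C_n,D_n)$ are admissible tori with $(C_n,D_n)\to(C,D)$, then $D$ contains $f(p)$ and $f(q)$ for two \emph{distinct} points $p,q\in C\cap\La$. Indeed, double stability yields $\#\limsup(C_n\cap\La)\ge 2$; choosing distinct $p,q$ in this $\limsup$ and subsequences $C_{n_k}\cap\La\ni p_{n_k}\to p$, $q_{n_k}\to q$, admissibility gives $f(p_{n_k}),f(q_{n_k})\in D_{n_k}$, and letting $k\to\infty$ (using continuity of $f$ and $D_{n_k}\to D$) gives $f(p),f(q)\in D$, while $p,q\in C\cap\La$ since $\limsup(C_n\cap\La)\subset C$. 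Consequently, to contradict the density above it suffices to exhibit a single torus $T=(C,D)\in\cal T_{\Gr}$ with $C$ doubly stable and with $D$ meeting the image slice $f(C\cap\La)$ in \emph{exactly one} point: then $(C,D)\in\cal T_{\Gr}$, yet no sequence of admissible tori can converge to it, so $T\notin\overline{\Gr T_0'}$ for every admissible $T_0'$, contradicting $\overline{\Gr T_0}=\cal T_{\Gr}$.

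I expect the production of this single ``bad'' torus to be the main obstacle. A doubly stable circle $C$ exists through any point of $\La$ by hypothesis, and (taking the constant sequence $C_i=C$ in the definition) it meets $\La$ in at least two points; the difficulty is to cut off one point of $f(C\cap\La)$ by a circle $D$ meeting the slice nowhere else. The delicate case is the \emph{wild} one, in which $f(C\cap\La)$ accumulates at the chosen point from all directions, so that every circle through it meets the slice again; here one must choose $C$ and the distinguished point with care, exploiting that the intersections $C_i\cap\La$ are robustly two-pointed. For Theorem \ref{m1} this control is exactly what the two-component hypothesis provides through the Koebe--Maskit theorem (Theorem \ref{koebe}, Corollary \ref{diam}), which is how double stability is guaranteed in the first place via Lemma \ref{stable}. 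Once such a $T$ is produced, its existence contradicts the density of admissible tori, forcing $\rho$ to be a conjugation. Finally, the passage from ``$\La_f$ has empty interior in $\La$'' to the emptiness of the interior of the set in \eqref{cccc} is routine: a nonempty open family of admissible circles would, upon evaluation near a shared intersection point with $\La$, sweep out a nonempty relatively open subset of $\La$ contained in $\La_f$, contradicting the former conclusion.
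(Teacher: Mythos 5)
Your overall architecture is the same as the paper's: assume $\rho$ is not a conjugation, use Lemma \ref{Zdense} to get Zariski density of $\Gr$, use Theorem \ref{densec} to turn nonempty interior of $\La_f$ into density of the $\Gr$-orbit of one admissible torus $T_0$ in $\cal T_{\Gr}$, and then contradict this density with a torus that no sequence of admissible tori can approach, via double stability. Your ``limit lemma'' is exactly the paper's Step 3 (equivariance of $f$ plus $\#\limsup(\ga_n C_1\cap\La)\ge 2$ plus injectivity of $f$), and your reduction is correct. But there is a genuine gap: the entire argument hinges on producing the single ``bad'' torus $T=(C,D)\in\cal T_{\Gr}$ with $C$ doubly stable and $D$ meeting the relevant limit set in exactly one point, and you explicitly do not produce it --- you flag it as ``the main obstacle'' and speculate about how it might be handled. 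Without that construction the proof is incomplete, since everything else is a (correct) reduction to it.

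The missing idea is short, and it dissolves the ``wild case'' you worry about by reversing the order of choices: one constructs $D$ \emph{first}, relative to the whole image limit set $\La_{\rho(\Ga)}$ rather than to a slice $f(C\cap\La)$ of a previously chosen $C$. Fix any point of $\La_{\rho(\Ga)}$ and let $B'$ be the minimal disk centered at that point containing $\La_{\rho(\Ga)}$ (this uses $\La_{\rho(\Ga)}\neq\S^2$); by minimality its boundary circle $D'$ meets $\La_{\rho(\Ga)}$, say at $f(\xi_0)$, and one takes $D$ to be a circle tangent to $D'$ at $f(\xi_0)$ on the outside of $B'$. Then $D\cap\La_{\rho(\Ga)}=\{f(\xi_0)\}$ \emph{exactly}, for every possible choice of $C$. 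Only now does one invoke double stability of $\La$ to choose a doubly stable circle $C$ through $\xi_0$; the torus $T=(C,D)$ lies in $\cal T_{\Gr}$ because $(\xi_0,f(\xi_0))\in T\cap\La_\rho$, and your limit lemma (applied with $D\cap\La_{\rho(\Ga)}$ a single point, which is stronger than $D\cap f(C\cap\La)$ a single point) gives the contradiction. Note also that your suggestion that the two-component hypothesis and the Koebe--Maskit theorem (Theorem \ref{koebe}, Corollary \ref{diam}) are what should rescue this step is off-target within Theorem \ref{m3}: those enter only in Lemma \ref{stable}, i.e.\ in verifying the double stability hypothesis for the groups of Theorem \ref{m1}; inside Theorem \ref{m3} itself the bad torus comes from the elementary minimal-disk/tangency argument above plus double stability, and nothing more.
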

\begin{proof}
Suppose that $\rho$ is not a conjugation, so that
 $\Gr$ is Zariski dense by Lemma \ref{Zdense}.
It  follows easily from the minimality of  the limit set $\La_\rho$ of $\Ga_\rho$ that 
 \be\label{ff} \La_\rho=\{(\xi, f(\xi))\in \S^2 \times \S^2 : \xi \in \La \}.\ee

Let $\tilde \La_{\Gamma_\rho}$ be as in Theorem \ref{densec}, which must be 
 of the form $\{(\xi, f(\xi)): \xi \in \tilde \La\}$
 for some dense subset $\tilde \La$ of $ \La$.
 
  Suppose on the contrary that $\La_f$ has non-empty interior. Then $\La_f\cap 
 \tilde \La\ne \emptyset$. 
 It follows that there exists $C_0\in \cal C_\La$ such that $C_0\cap \tilde \La \ne \emptyset$ and $f(C_0\cap \La)$ is contained in some circle, say, $D_0$.
 Set $T_0=(C_0, D_0)$.
 Since $C_0\cap \tilde \La\ne\emptyset$, it follows from Theorem \ref{densec} that
 \be\label{nd3} \overline{\Ga_\rho T_0}= \cal T_\rho\ee 
where $\cal T_\rho=\cal T_{ {\Ga_\rho}}$ is the space of all tori intersecting $\La_\rho$.
 On the other hand, we now show 
 that the condition $f(C_0\cap \La)\subset D_0$
 implies that
 $\Ga_\rho T_0$ cannot be dense in $\cal T_{\rho}$,
 using Lemma \ref{limsuplim}.
 
 \medskip
 
 \noindent{\bf Step 1:} 
 There exists a circle $D$ which intersects $\La_{\rho(\Ga)}$ precisely at one point, say $f(\xi_0)$.
To show this, fix any $f(\xi)\in \La_{\rho(\Ga)}$ and let $D'$ be the boundary of the minimal disk $B'$ centered at $f(\xi)$ which contains all of $\La_{\rho(\Ga)}$. By the minimality of $B'$,
$D'\cap \Lambda_{\rho(\Ga)}\ne \emptyset$.
Choose $f(\xi_0)\in D'\cap \La_{\rho(\Ga)}$, and let $D$ be a circle tangent to $D'$ at $f(\xi_0)$ which does not intersect the interior of $B'$.
 
 \medskip
 
 \noindent{\bf Step 2:} By the hypothesis that $\La$ is doubly stable, 
we can  find a circle $C$ containing $\xi_0$ which is doubly stable for $\La$.
 
 \medskip

 \noindent{\bf Step 3:}  Setting $T=(C, D)$, we have $T\notin \overline{\Ga_\rho T_1}$ for any torus $T_1=(C_1, D_1)$ with $f(C_1\cap \La)\subset D_1$. In particular, $T\notin \overline{\Ga_\rho T_0}$.
 
Suppose on the contrary that  there exists a sequence $\ga_n\in \Ga$ such that
  $\ga_n C_1$ converges to $C$ and $\rho(\ga_n) D_1$ converges to $D$.
  Since $C$ is doubly stable for $\La$, we have
  \be \label{at2} \# \limsup (\ga_n C_1 \cap \La) \ge 2.\ee

By the $\rho$-equivariance of $f$, we have
$$  f( \ga_n C_1 \cap \La ) =f(\ga_n (C_1\cap \La))=
\rho(\ga_n) f(C_1\cap \La) \subset \rho(\ga_n) D_1 \cap \La_{\rho(\Ga)} .$$ 
Hence
$$\limsup f( \ga_n C_1 \cap \La ) \subset \limsup 
  (\rho(\ga_n) D_1 \cap \La_{\rho(\Ga)}) \subset D\cap \La_{\rho(\Ga)}.$$
 It now follows from \eqref{at2} and the injectivity of $f$ that
  $$\# D\cap \La_{\rho(\Ga)}\ge 2.$$
  This contradicts the choice of $D$ made in Step (1), hence proving Step (3).
 
 \medskip
 
  Since $(\xi_0, f(\xi_0))\in T\cap \La_\rho$, we have
  $T\in \cal T_{\rho}$.
Hence we obtained a contradiction to \eqref{nd3}. Therefore $\La_f$ has empty interior, completing the proof.
 \end{proof}

\bibliographystyle{plain}

\end{document}